\documentclass[11pt]{amsart}

\usepackage{amsmath}
\usepackage{amssymb}
\usepackage{fancybox}
\usepackage{eucal}
\usepackage{amsthm}
\usepackage{amscd}
\usepackage{wasysym}
\usepackage{url}
\usepackage{MnSymbol} 
\usepackage{natbib}
\usepackage{url}


\setlength{\textwidth}{6in} \setlength{\textheight}{8in}
\setlength{\oddsidemargin}{0.1in}
\setlength{\evensidemargin}{\oddsidemargin}

\usepackage{amssymb,}
\usepackage[]{amsmath, amsthm, amsfonts,graphicx, amscd,}
\usepackage[all,cmtip]{xy}
\usepackage{setspace, xspace}

\usepackage[colorlinks=true,urlcolor=blue,linkcolor=blue]{hyperref}

\newcommand{\T}[0]{k}

\newcommand{\dcf}[0]{DCF_0}
\newcommand{\rcf}[0]{RCF}
\newcommand{\acfa}[0]{ACFA}

\DeclareMathSymbol{\mlq}{\mathord}{operators}{``}
\DeclareMathSymbol{\mrq}{\mathord}{operators}{`'}
\DeclareMathOperator{\dom}{dom}

\DeclareMathOperator{\cl}{cl}
\DeclareMathOperator{\tp}{tp}
\DeclareMathOperator{\acl}{acl}

\newtheorem {thm}{Theorem}[section]

\newtheorem{prop}[thm]{Proposition}

\newtheorem{claim}[thm]{Claim}
\theoremstyle{remark}
\newtheorem{rem}[thm]{Remark}
\newtheorem{np*}{Non-Proof}

\theoremstyle{definition}
\newtheorem{defn}[thm]{Definition}

\numberwithin{subcase}{case}

\newcommand{\mc}{\mathcal }

\newcommand{\nonitalics}{}

\begin{document}

\title{Independence in computable algebra}

\author[M. Harrison-Trainor]{Matthew Harrison-Trainor}
\address{Group in Logic and the Methodology of Science\\
University of California, Berkeley\\
 USA}
\email{matthew.h-t@berkeley.edu}
\urladdr{\href{http://www.math.berkeley.edu/~mattht/index.html}{www.math.berkeley.edu/$\sim$mattht}}

\author[A. Melnikov]{Alexander Melnikov}
\address{The Institute of Natural and Mathematical Sciences\\
Massey University\\
 New Zealand}
\email{alexander.g.melnikov@gmail.com}
\urladdr{\href{https://dl.dropboxusercontent.com/u/4752353/Homepage/index.html}{https://dl.dropboxusercontent.com/u/4752353/Homepage/index.html}}

\author[A. Montalb\'an]{Antonio Montalb\'an}
\address{Department of Mathematics\\
University of California, Berkeley\\
	USA}
\email{antonio@math.berkeley.edu}
\urladdr{\href{http://www.math.berkeley.edu/~antonio/index.html}{www.math.berkeley.edu/$^\sim$antonio}}

\thanks{The first author was partially supported by the Berkeley Fellowship and NSERC grant PGSD3-454386-2014.
The second author was supported by the Packard Foundation.
The third author was partially supported by the Packard Fellowship and NSF grant \# DMS-1363310.}

\begin{abstract}
 We give a sufficient condition for an algebraic structure to have a computable presentation with a computable basis and a computable presentation with no computable basis. 
We apply the condition to  differentially closed, real closed, and difference closed fields with the relevant notions of independence. To cover these classes of structures we introduce a new technique of \emph{safe extensions} that was not necessary for the previously known results of this kind.  We will then apply our techniques to derive new corollaries on the number of computable presentations of these structures.
 The condition also implies classical and new results on vector spaces, algebraically closed fields, torsion-free abelian groups and Archimedean ordered abelian groups.  
\end{abstract}

\keywords{computable structure, computable algebra, pregeometry}
\subjclass{03D45, 03C57, 12Y05}

\maketitle


\section{Introduction}
The main objects of this paper are computable algebraic structures. A countably infinite algebraic structure $\mathcal{A}$ is \emph{computable} (Mal{\textquotesingle}cev~\cite{Malcev61} and Rabin~\cite{Rabin60}) if it admits a labeling of its domain by natural numbers so that the operations on $\mathcal{A}$ become Turing computable upon the respective labels. Such a numbering is called a \emph{computable presentation}, a \emph{computable copy}, or a \emph{constructivization} of $\mc{A}$. Without loss of generality, we restrict ourselves to countable structures with domain $\omega$ (the natural numbers).
Examples of computably presented structures include recursively presented groups with decidable word problem (Higman~\cite{Higman61}) and  explicitly presented fields  (van der Waerden~\cite{vanderWaerden30} and Fr\"ohlich and Shepherdson \cite{FrohlichShepherdson56}).

\subsection{Independence with applications}
 Mal{\textquotesingle}cev and his mathematical descendants  were perhaps the first to realize the fundamental role  of various notions of independence in effective algebra, especially in the study of the number of computable presentations of structures. In his pioneering paper~\cite{Malcev62},  Mal{\textquotesingle}cev made an important observation:
\begin{center}
The additive group $\mathbb{V}^\infty \cong \bigoplus_{i \in \mathbb{N}} \mathbb{Q}$ has two computable presentations that are not computably isomorphic.
\end{center}
This effect had never been seen before, since algorithms had mostly been applied to finitely generated structures whose presentations are effectively unique.  Mal{\textquotesingle}cev noted that $\mathbb{V}^\infty$ clearly has  a  ``good'' computable presentation $\mc{G}$ that is  built upon a computable basis, and he constructed a  ``bad'' computable presentation $\mc{B}$ of $\mathbb{V}^\infty$ that has no computable basis. Clearly,  $\mc{G}$ is not computably isomorphic to $\mc{B}$ (written $\mc{G} \not \cong_{comp} \mc{B}$). 
 Essentially the same argument applies to the algebraically closed field $\mathbb{U}$ of infinite transcendence degree~\cite{MetakidesNerode79}.
Similarly,  manipulations with bases were used in the study of the number of computable copies in the contexts of torsion-free abelian groups~\cite{Dobritsa83, Nurtazin74b, Goncharov82} and ordered abelian groups~\cite{GoncharovLemppSolomon03} of infinite rank (though for ordered abelian groups, the existence of a ``bad'' copy is a new result appearing in this paper). The latter two examples are nontrivial, since the existence of a ``good'' copy is \emph{not} evident.
Nonetheless, in all these examples the ``good'' copy $\mc{G}$ and the ``bad'' copy $\mc{B}$ are isomorphic \emph{relative to} the halting problem, or $\Delta^0_2$-isomorphic. Goncharov~\cite{Goncharov82} showed that $\mc{G} \not\cong_{comp} \mc{B}$ and $\mc{G} \cong_{\Delta^0_2} \mc{B}$ imply there exist \emph{infinitely many} computable presentations of the structure up to computable isomorphism. Thus, in each case discussed above we get infinitely many effectively different presentations.

Notions of independence play a central role in the study of the combinatorial properties of effectively presented vector spaces and for other structures with an appropriate notion of independence. Such studies were quite popular in the 70's and 80's; the standard reference is the fundamental paper of Metakides and Nerode~\cite{MetakidesNerode77}, see also \cite{Dekker69, Dekker71a, Dekker71b,Shore78,Downey84} and, for applications to reverse mathematics, \cite{Simpson99}. Many results on subspaces of effectively presented vector spaces go thorough in the abstract setting of computable pregeometries (to be defined) -- see the survey \cite{DowneyRemmel98} of Downey and Remmel. A number of results true of vector spaces go through for an arbitrary pregeometry if we have access to a ``good'' presentation $\mc{G}$ with a computable basis. See, e.g., a recent paper of Conidis and Shore~\cite{ConidisShore} for a non-elementary illustration of this phenomenon.

\subsection{\texorpdfstring{The Mal{\textquotesingle}cev property}{The Malcev property}}  We would like to unify the known results and  extend them  to other classes of structures.
 The definition below is central to the paper. Throughout the paper,   $\mc{K}$ stands for  a class of computable structures that admits a notion of independence (we will shortly clarify the notions of ``independence'' and ``basis'').

\begin{defn}
A class $\mathcal{K}$ has the
\emph{ Mal{\textquotesingle}cev property}
if each member $\mathcal{M}$ of $\mathcal{K}$ of infinite dimension  has 
a computable presentation $\mc{G}$ with a computable basis and a computable presentation $\mc{B}$ with no computable basis  such that $\mc{B} \cong_{\Delta^0_2} \mc{G}$.
\end{defn}

Of course, we can also talk about a single structure having the  Mal$'$cev property, but it will be more natural to consider only classes since all of our applications will be to an entire class of structures.

In this paper we address:

\begin{center}
\underline{Question}: Which common algebraic classes have the  Mal{\textquotesingle}cev property?
\end{center}
Note that, if  $\mathcal{K}$  has the  Mal{\textquotesingle}cev property, then every $\mathcal{M} \in \mathcal{K}$ of infinite dimension has infinitely many computable copies up to computable isomorphism~\cite{Goncharov82}. 
The standard abstraction for independence is the notion of a \emph{pregeometry} that we briefly discuss in the next subsection.

\subsection{Effective pregeometries} A pregeometry on $\mathcal{M}$ is a  function $\cl: \mathcal{P}(M) \rightarrow \mathcal{P}(M)$,
where $\cl(A)$ should be thought as the ``span'' of $A$~\cite{Whitney35}. A pregeometry must satisfy several natural properties (e.g., $\cl (\cl(A)) = \cl(A)$) that will be stated in the preliminaries (see \S \ref{sec:pre}).  Until then, the reader may safely rely on her/his intuition. For instance, we can define the notions of dimension, rank and  basis  in terms of $\cl$.
Computable pregeometries have been intensively studied, see survey~\cite{DowneyRemmel98}. We, however, will be working in the weaker context of pregeometries that can merely be \emph{effectively enumerated}. 
This phenomenon is typically captured in terms of relatively intrinsically computably enumerable (r.i.c.e.)~families of relations (to be defined--see Definition \ref{def:rice-pre}). 
It is crucial for us that being r.i.c.e.~is equivalent to being definable in $\mathcal{L}_{\omega_1 \omega}^c$ logic in the language of the structure by an infinitary computable $\Sigma_1$-formula~\cite{AshKnightManasseSlaman89,Chisholm90}. Thus, we  will have a syntactical definition of $\cl$,  and in fact this definition will induce a pregeometry on every member of a class $\mathcal{K}$ of computable algebras. For convenience, we say that $\cl$ is a $r.i.c.e.$ \emph{pregeometry on $\mathcal{K}$}.

\subsection{The meta-theorem} Let $\mc{M}$ be a computable structure, $(\mc{M},\cl)$ a r.i.c.e.~pregeometry. 
 The \emph{independence diagram} $\mc{I}_{\mc{M}}({\bar{c}})$ of $\bar{c}$ in $\mc{M}$ is the collection of all existential formulas true of tuples independent over $\bar{c}$. 
We also say that independent tuples are  \emph{locally indistinguishable} if for every  $\exists$-formula true of one independent  tuple  we can find independent witnesses for this formula within the $\cl$-span of \emph{any} independent tuple (we will give the definition later--see Definition \ref{defn:loc}--but intuitively, it means that we would not be able to ``code''  a computably enumerable (c.e.) set using existential formulas into the $\cl$-span of independent tuples, since the $\cl$-span of any two independent tuples is in some sense the same).
 We will prove that the condition below is sufficient for producing a ``good'' computable presentation of $\mathcal{M}$ with a computable basis:

\medskip

\noindent  \textbf{Condition G:} Independent tuples are locally indistinguishable in $\mc{M}$ and for each $\mc{M}$-tuple $\bar{c}$, $\mc{I}_{\mc{M}}({\bar{c}})$ is computably enumerable uniformly in~$\bar{c}$.

\medskip

 For convenience, we will be using one more term which should be thought of as saying that independent types are non-principal. We say that \emph{dependent elements are dense in $\mc{M}$} if, whenever $\mc{M} \models \exists \bar{y} \psi(\bar{c}, \bar{y}, a)$ for a quantifier-free formula $\psi$, non-empty tuple $\bar{c}$, and $a \in \mc{M}$, there is a $b\in \cl(\bar{c})$ such that $\mc{M} \models \exists \bar{y} \psi(\bar{c}, \bar{y}, b)$. We may also assume that $\bar{c}$ contains at least $m$ independent elements, for some fixed $m$. This corresponds to localizing the pregeometry at a finite set.
 The next property is sufficient for having a ``bad'' computable presentation of $\mathcal{M}$ with no computable basis:

\medskip

\noindent \textbf{Condition B:} Dependent elements   are dense in $\mc{M}$.

\medskip

\noindent  Furthermore, our methods allow us to keep the isomorphisms $\Delta^0_2$. We summarize the above results in a theorem:

\begin{thm}\label{main} \nonitalics
Let $\mc{K}$ be a class of computable structures that admits a r.i.c.e.~pregeometry $\cl$.
If  each $\mc{M}$ in $\mc{K}$ of infinite dimension satisfies Conditions $G$ and $B$, then $\mc{K}$ has the  Mal{\textquotesingle}cev property.
\end{thm}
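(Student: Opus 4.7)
The plan is to construct, for each infinite-dimensional $\mc{M} \in \mc{K}$, a good computable copy $\mc{G}$ of $\mc{M}$ with a computable basis, a bad computable copy $\mc{B}$ of $\mc{M}$ with no computable basis, and a $\Delta^0_2$-isomorphism $\mc{B} \to \mc{G}$. The good copy I would build by an effective Henkin-style construction in which the symbols $a_0, a_1, \ldots$ are designated in advance to be the basis. At stage $s$, having committed to a finite quantifier-free diagram over $\bar a_{\leq s}$, I process the $s$-th existential formula that should hold over the span of the basis and realize it via new witnesses coming from $\cl(\bar a_{\leq s}) \cup \{a_{s+1}\}$. Condition G supplies exactly what is needed: the uniform c.e.~independence diagram $\mc{I}_{\mc{M}}(\bar c)$ lets me effectively enumerate the existential formulas I must realize, while local indistinguishability guarantees that any such formula, once witnessed somewhere, can be rewitnessed in the span of my chosen basis. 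Thus $\{a_i\}$ remains an independent basis throughout and is computable by fiat.

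For $\mc{B}$, I would run a computable finite-injury priority construction meeting, for each index $e$, the requirement $R_e$ asserting that $W_e$ is not a basis of $\mc{B}$. The strategy for $R_e$ waits until $W_e$ enumerates a tuple $\bar c$ that looks like an independent basis of size at least the threshold $m$, and then invokes Condition B: since dependent elements are dense in $\mc{M}$, I can find, inside $\cl(\bar c)$, a witness realizing the same existential type as some element $a$ that the global enumeration is about to introduce. Declaring $a$ to be that witness makes $\bar c \cup \{a\}$ dependent and spoils $W_e$. The novel \emph{safe extensions} technique is needed precisely here. In classes such as $\dcf$, $\rcf$, or $\acfa$, a single Condition-B move can entail many further atomic commitments (new derivatives, new order comparisons, new $\sigma$-images), and one must show that the Condition-B witness can be amalgamated into the current diagram \emph{safely}, meaning without contradicting earlier decisions or blocking future requirements. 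Verifying that such safe amalgamations always exist, and that they can be arranged inside a priority argument, is the main technical obstacle of the proof, and is what distinguishes the present argument from the classical vector-space or $\acl$-based settings where one can simply use algebraic closure.

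For the $\Delta^0_2$-isomorphism I would run a back-and-forth between $\mc{B}$ and $\mc{G}$ using the halting problem as an oracle. Since the independence diagram is c.e., independence over any finite tuple is a $\Pi^0_1$ condition, so $\emptyset'$ decides independence in both copies and can produce, for any finite independent tuple in one copy, a tuple in the other with the same existential type; local indistinguishability then permits each partial isomorphism built so far to be extended by one more element on either side, and iterating yields the desired $\Delta^0_2$-isomorphism $\mc{B} \cong_{\Delta^0_2} \mc{G}$.
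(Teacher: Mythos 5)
Your high-level decomposition (a good copy from Condition G, a bad copy from Condition B, compose $\Delta^0_2$ isomorphisms) matches the paper, but each of the three steps has a genuine gap. For the good copy, the central difficulty is that independence in $\mc{M}$ is only $\Pi^0_1$ (the pregeometry is merely r.i.c.e.), so you cannot designate a basis ``in advance'' and keep it ``by fiat'': any computable choice of images for the formal basis elements $a_i$ will sometimes pick tuples of $\mc{M}$ that are later discovered to be dependent, and the construction must recover from this. The paper builds $\mc{G}$ as a pullback of $\mc{M}$ along finite partial maps $\tau_s$ and maintains a \emph{safeness} invariant: every existential commitment involving the current candidates $\tau_s(a_0),\ldots,\tau_s(a_s)$ must already appear in the c.e.\ independence diagram over the part still believed independent, so that when a candidate fails, local indistinguishability supplies a replacement tuple (inside the span of a genuinely independent tuple of $\omega$-least span) satisfying the same commitments. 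This is where the ``safe extensions'' technique lives --- in the good-copy construction, not, as you place it, in the bad-copy one. Your Henkin sketch also never explains why the resulting structure is isomorphic to $\mc{M}$: realizing existential formulas from the independence diagrams does not pin down the isomorphism type of $\cl(a_0,a_1,\ldots)$. Finally, the concluding $\emptyset'$-oracle back-and-forth is unjustified: two computable copies of a structure need not be $\Delta^0_2$-isomorphic, and matching existential types of \emph{independent} tuples says nothing about extending a partial map over the dependent elements. In the paper the $\Delta^0_2$ isomorphisms are not produced afterwards; each construction yields a pointwise-convergent sequence of partial embeddings into $\mc{M}$ whose limit is the isomorphism.

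For the bad copy, the move you describe does not meet your own requirement. Making $\bar{c}\cup\{a\}$ dependent for $\bar{c}\subseteq W_e$ is exactly what happens when $W_e$ \emph{is} a basis and $a$ lies in its span, so it defeats nothing; moreover one cannot recognize at a finite stage that $W_e$ ``looks like a basis,'' since spanning is not a finitary condition. The paper instead meets the requirements ``$\varphi_e$ is not a dependence algorithm for $\mc{B}$'': wait for $\varphi_e$ to declare a designated witness $b_e$ independent of $b_0,\ldots,b_{e-1}$, then use Condition B to make $b_e$ dependent on them (rearranging the map to $\mc{M}$ accordingly), and finally invoke Proposition~\ref{comp-pregeometry} --- for a c.e.\ pregeometry, having a computable basis is equivalent to the closure operator being computable --- to conclude that $\mc{B}$ has no computable basis. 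Your requirements should be reformulated along these lines.
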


\noindent The (meta-)theorem above is the central technical tool of the paper that allows us to separate our proofs into an algebraic part and a part consisting of the effective combinatorics of r.i.c.e.~pregeometries. We prove the metatheorem in \S \ref{sec:main-thm}.

\subsection{Applications} 

That vector spaces and algebraically closed fields have the Mal{\textquotesingle}cev property is obvious. We discuss several non-trivial applications below. As we will note in the conclusion, there are indeed many more potential applications of our metatheorem. To keep the paper short, we give only five applications that (we think) are the most important ones.

 \subsubsection{Differentially closed fields} A \textit{differential field} \cite{Ritt50} is a field $K$ together with a derivation operator $\delta: K \to K$. A \emph{differentially closed field} is an existentially closed differential field. Robinson and Blum~\cite{Robinson59,Blum68} came up with an elegant first-order axiomatization for differentially closed fields of characteristic zero ($\dcf$). There has been a lot of work on model theory of differentially closed fields as described in~\cite{Marker06}. It is known that  $\dcf$ is complete, decidable, and has quantifier elimination.

In contrast to algebraically closed fields, we don't know any ``natural'' example of a non-trivial differentially closed field. However Harrington~\cite{Harrington74} showed that every computable differential field can be computably embedded into a \emph{computable presentation} of its differential closure. Thus, if we start with some differential field that we fully understand, we at least can \emph{effectively construct} its differential closure.  Differentially closed fields have some further nice computability-theoretic properties including the low property (see, e.g., \cite{MarkerMiller}).

Differential fields admit a natural notion of independence called $\delta$-independence (to be defined).
The first new application of Theorem~\ref{main}, which we prove in \S \ref{DCF}, is:

\begin{thm}\label{th:diff} \nonitalics The class of computable differentially closed fields  of characteristic zero has the  Mal{\textquotesingle}cev property with respect to $\delta$-independence.
\end{thm}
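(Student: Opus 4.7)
The plan is to apply the metatheorem (Theorem~\ref{main}) with $\cl$ taken to be the differential algebraic closure: $\cl(A)$ is the set of elements of $\mc{M}$ satisfying some nonzero differential polynomial with coefficients in the differential subring generated by $A$. This is Kolchin's classical pregeometry on any model of $\dcf$, and the induced notion of independence is precisely $\delta$-independence. The pregeometry is $r.i.c.e.$ on the class of computable models of $\dcf$: the relation ``$b\in\cl(a_1,\ldots,a_n)$'' is uniformly captured by a computable $\Sigma_1$ formula given as the effective disjunction, over all differential polynomials $p(\bar{x},y)$ with integer coefficients, of the quantifier-free statement ``some coefficient of $p(\bar{a},y)$ is nonzero in $\mathbb{Z}\{\bar{a}\}$, and $p(\bar{a},b)=0$.'' So it suffices to verify Conditions~$G$ and $B$ on each computable $\mc{M}\models\dcf$ of infinite $\delta$-dimension.

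The two model-theoretic facts I rely on throughout are: (i) $\dcf$ admits quantifier elimination and is decidable (Robinson, Blum); (ii) for any tuple $\bar{c}\subset\mc{M}$, the differential closure of $\bar{c}$ is prime over $\bar{c}$, is itself a model of $\dcf$, elementarily embeds in $\mc{M}$ over $\bar{c}$, and has underlying set equal to $\cl(\bar{c})$.

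For Condition~$B$, suppose $\mc{M}\models\exists\bar{y}\,\psi(\bar{c},\bar{y},a)$ with $\psi$ quantifier-free. By quantifier elimination, $\exists\bar{y}\,\psi(\bar{c},\bar{y},x)$ is equivalent over $\bar{c}$ to a quantifier-free $\chi(\bar{c},x)$; then $\mc{M}\models\exists x\,\chi(\bar{c},x)$, so by elementarity of the differential closure, $\cl(\bar{c})\models\exists x\,\chi(\bar{c},x)$, yielding $b\in\cl(\bar{c})$ with $\mc{M}\models\exists\bar{y}\,\psi(\bar{c},\bar{y},b)$. For Condition~$G$: any two $\delta$-independent $n$-tuples over $\bar{c}$ share the same quantifier-free type (each asserting merely that no coordinate is $\delta$-algebraic over $\bar{c}$ together with the remaining coordinates), hence by QE the same existential type. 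Combined with decidability of $\dcf$ over parameters, this makes the independence diagram $\mc{I}_{\mc{M}}(\bar{c})$ uniformly computable (in particular c.e.) in $\bar{c}$. For local indistinguishability, an existential formula $\exists\bar{y}\,\phi(\bar{c},\bar{a},\bar{y})$ true of some independent $\bar{a}$ holds of any other independent $\bar{a}'$ by sameness of existential types, and witnesses $\bar{y}'$ for $\bar{a}'$ may be chosen inside $\cl(\bar{c},\bar{a}')$ because the differential closure of $\bar{c}\bar{a}'$ is an elementary submodel of $\mc{M}$ that already realizes the existential.

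The main obstacle, as I see it, is not any hidden algebraic difficulty---QE together with the primality and elementarity of the differential closure handle the substantive content---but rather the careful unpacking of Definition~\ref{defn:loc}: one must choose the witnesses $\bar{y}'$ inside $\cl(\bar{c},\bar{a}')$ so that the prescribed independence pattern on the witnesses is preserved. This is a routine consequence of QE (the quantifier-free type of the witnesses is uniquely determined by that of the original $\bar{y}$ together with $\bar{a}$), but it is the one place where one must pass from a mere ``invoke decidability'' argument to a concrete analysis of $\delta$-types in order to align with the definitions set up in the metatheorem.
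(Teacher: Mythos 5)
Your overall strategy matches the paper's: uniqueness of the independent type over any tuple $\bar{c}$ gives local indistinguishability (one may simply take $\bar{w}=\bar{v}$ in Definition~\ref{defn:loc}), decidability of $\dcf$ together with the computability of the quantifier-free diagram of $\bar{c}$ gives uniform decidability of the independence diagram, and Condition B comes from realizing the existential inside an elementary substructure all of whose elements are $\delta$-algebraic over $\bar{c}$. So the proposal is essentially the paper's proof, and it is correct in outline.

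One assertion you make is false, although your argument survives its correction: the differential closure of $\bar{c}$ (the prime model over $\bar{c}$) does \emph{not} have underlying set $\cl(\bar{c})$. The prime model is atomic over $\bar{c}$ and omits every non-isolated type of a $\delta$-algebraic element; for instance, the differential closure of $\mathbb{Q}$ has field of constants $\overline{\mathbb{Q}}$, whereas $\cl(\varnothing)$ in a large model of $\dcf$ contains constants transcendental over $\mathbb{Q}$. What your Condition B step actually needs is only some elementary substructure of $\mc{M}$ that contains $\bar{c}$ and is contained in $\cl(\bar{c})$: the embedded prime model over $\bar{c}$ is such a structure (this is precisely the paper's argument), or alternatively one can check directly that $\cl(\bar{c})$ itself satisfies Blum's axioms and hence is elementary in $\mc{M}$ by model completeness. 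Either repair is one line, but the identification as written is wrong. Finally, the ``main obstacle'' you describe at the end is not one: Definition~\ref{defn:loc} constrains the tuple $\bar{w}$ that replaces $\bar{u}$, not the witnesses to the existential quantifier, and with $\bar{w}=\bar{v}$ the condition $w_i\in\cl(\bar{c},v_1,\ldots,v_i)$ holds trivially, so no analysis of where the existential witnesses live is required.
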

\medskip

Our result, in a way, improves the result of Harrington since every computable differential closed field has an even ``nicer'' presentation in which $\delta$-dependence is decidable (however, we may lose the computable embedding of the original differential field).  Furthermore, if the $\delta$-rank of $\mc{M}$ is infinite, then  we can produce infinitely many effectively non-isomorphic computable presentations of $\mc{M}$.

\subsubsection{Difference closed fields}
 A \textit{difference field}~\cite{Cohn65} is a field together with a distinguished automorphism $\sigma$. 
A \emph{difference closed field} is an existentially closed difference field. The theory $\acfa$ of \textit{difference closed fields} is first-order axiomatizable, and
 the theories $\acfa$, $\acfa_0$, and $\acfa_p$  are decidable, see \cite[(1.4) of][]{ChatzidakisHrushovski99}.  For a detailed exposition of the model theory of difference fields, see \cite{ChatzidakisHrushovski99}.
The authors are unaware of any ``naturally defined'' algebraic example of a difference closed field. Nonetheless, it is not hard to \emph{effectively construct} an example of a difference closure of a given algebraically closed field with an automorphism. It can be done using, say, an effective variation of the Henkin construction~\cite{AshKnight00} or Ershov's Kernel Theorem~\cite{ErshovGoncharov00}. Difference fields admit a natural notion of \emph{transformal independence} (to be defined).
In \S \ref{sec:ACFA} we prove:

\begin{thm}\label{th:acfa}\nonitalics The class of computable difference closed fields has the  Mal{\textquotesingle}cev property with respect to transformal independence.
\end{thm}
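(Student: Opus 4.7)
The plan is to apply the metatheorem (Theorem~\ref{main}) with $\cl$ the transformal algebraic closure: $a \in \cl(A)$ iff some nonzero ordinary polynomial $P(x_0,\dots,x_n)$ over the difference ring generated by $A$ satisfies $P(a,\sigma(a),\ldots,\sigma^n(a))=0$. This defines a pregeometry on models of $\acfa$ (exchange is standard, see~\cite{ChatzidakisHrushovski99}), and it is r.i.c.e.\ because ``$a\in\cl(\bar{c})$'' is a computable disjunction of existential formulas, one per difference polynomial with coefficients in $\mathbb{Z}\{\bar{c}\}$, uniform in $\bar{c}$. It remains to verify Conditions G and B on every infinite-dimensional model $\mc{M}$.

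For Condition G I would use the near-quantifier-elimination for $\acfa$ from~\cite{ChatzidakisHrushovski99}: over a field-algebraically closed, $\sigma$-invariant base $C$, complete types are determined by quantifier-free types. Two transformally independent $n$-tuples $\bar{a},\bar{a}'$ over $\bar{c}$ freely generate isomorphic difference-field extensions of $\bar{c}$, so after extending to a common algebraic closure they realize the same quantifier-free, and hence the same, type over $\bar{c}$. Thus $\mc{I}_{\mc{M}}(\bar{c})$ is independent of the chosen independent tuple, and it is uniformly c.e.\ because $\acfa$ is decidable~\cite{ChatzidakisHrushovski99} and the quantifier-free diagram of $\bar{c}$ in $\mc{M}$ is computable. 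For local indistinguishability, an existential witness for $\exists\bar{y}\,\phi(\bar{c},\bar{a},\bar{y})$ is transported along the difference-field isomorphism sending $\bar{a}$ to any other independent $\bar{a}'$, and, by the existential closedness of $\mc{M}$, realized within the transformal closure of $\bar{c}\bar{a}'$.

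For Condition B I would show that if $\phi(x):=\exists\bar{y}\,\psi(\bar{c},\bar{y},x)$ is satisfied in $\mc{M}$ then it has a solution $b\in\cl(\bar{c})$. By model-completeness of $\acfa$, $\phi$ is satisfied in every difference-closed extension of $\mathbb{Q}\langle\bar{c}\rangle$; using the description of definable sets in $\acfa$ from~\cite{ChatzidakisHrushovski99}, the subset of $\mc{M}$ defined by $\phi$ contains a point transformally algebraic over $\bar{c}$---concretely, one can specialize a transformally transcendental solution along a generic point of the underlying difference subvariety. This is the main obstacle: unlike $\dcf$ and $\rcf$, where Noetherian/stable geometry makes density of algebraic points transparent, $\acfa$ is only simple and difference varieties require more delicate handling. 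Once Condition B is in place, the metatheorem yields the Mal$'$cev property and completes the proof of Theorem~\ref{th:acfa}.
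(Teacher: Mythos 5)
Your overall strategy and your treatment of Condition G essentially match the paper's: both rest on the uniqueness of independent types over algebraically closed difference subfields, the decidability of the completions of $\acfa$, and the fact that every formula is equivalent modulo $\acfa$ to an existential one (so the elementary diagram of $\mc{M}$ is decidable and the independence diagram of $\bar{c}$ can be enumerated by first enumerating $\acl(\bar{c})$ and computing the unique independent type over it). Local indistinguishability is then immediate, since $\tp(\bar{u}/\bar{c})=\tp(\bar{v}/\bar{c})$ for any two tuples independent over $\bar{c}$, so one may simply take $\bar{w}=\bar{v}$; your detour through transporting witnesses by an isomorphism is not needed.

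The genuine gap is in Condition B, and you have flagged it yourself as ``the main obstacle.'' Your proposed route---``specialize a transformally transcendental solution along a generic point of the underlying difference subvariety''---is not an argument: $\acfa$ has no Noetherian or cell-decomposition machinery that makes such a specialization automatic, and model completeness by itself does not tell you that the set defined by $\phi$ meets $\cl(\bar{c})$. The fact that closes the gap (and is what the paper uses) is the remark after Theorem 1.1 of Chatzidakis--Hrushovski: if $E$ is a difference subfield of a model $K$ of $\acfa$, then the set $K_0$ of elements of $K$ transformally algebraic over $E$ is itself a model of $\acfa$, hence $K_0 \preceq K$ by model completeness. Taking $E$ to be the difference field generated by $\bar{c}$, the sentence $\exists x\,\exists\bar{y}\,\psi(\bar{c},\bar{y},x)$, being true in $\mc{M}$, is true in $K_0$, and any witness $b\in K_0$ lies in $\cl(\bar{c})$. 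This is the same one-line argument used for $\dcf$ (with the differential closure of $\bar{c}$ playing the role of $K_0$), so no delicate handling of difference varieties is required once this elementary-substructure fact is invoked.
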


\noindent We emphasize that we also get a new corollary on the number of computable copies of  difference closed fields of infinite transformal rank.
\medskip

\subsubsection{Real closed fields}
We assume that the reader is familiar with the definition of an ordered field. \emph{Real closed fields} are existentially closed ordered fields.  Tarski~\cite{Tarski48} showed that the theory $\rcf$ is complete, decidable, and has quantifier elimination.
Model-theoretic features of real closed fields admit a generalization called \emph{o-minimality}, see e.g.~\cite{VanDenDries98}. We note that o-minimality has recently been applied to solve an open problem in pure number theory~\cite{PilaWilkie06}. 

Computability-theoretic properties of ordered and real closed fields have been investigated in~\cite{ErshovGoncharov00,KnightLange13,Levin}. 
In \S \ref{RCF} we prove:

\begin{thm}\label{th:rcf}\nonitalics The class of computable real closed fields  has the  Mal{\textquotesingle}cev property with respect to the standard field-theoretic (or, equivalently, model-theoretic) algebraic independence.
\end{thm}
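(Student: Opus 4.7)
The plan is to invoke the meta-theorem (Theorem~\ref{main}) with $\mc{K}$ the class of computable real closed fields and $\cl$ the field-theoretic algebraic closure, which on any real closed field is the real closure of the generated subfield. This is clearly a r.i.c.e.~pregeometry ($b\in\cl(\bar a)$ is witnessed by the existential formula asserting that some nontrivial polynomial relation over $\mathbb{Z}[\bar a]$ vanishes at $b$), so it remains to verify Conditions B and G on every $\mc{M}\in\mc{K}$ of infinite transcendence degree. Both verifications rest on two basic facts about $\rcf$: effective quantifier elimination (since $\rcf$ is decidable) and o-minimality in its algebraic form, so that every parametrically definable subset of $\mc{M}^n$ is a semialgebraic set admitting a cylindrical cell decomposition over $\cl$ of its parameters.

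For Condition B, let $\bar c$ contain enough independent elements and suppose $\mc{M}\models \exists\bar y\,\psi(\bar c,\bar y,a)$ for some $a\in\mc{M}$ and quantifier-free $\psi$. The set $D := \{x : \mc{M}\models\exists\bar y\,\psi(\bar c,\bar y,x)\}$ is, by quantifier elimination, a finite union of points and intervals with endpoints in $\cl(\bar c)\cup\{\pm\infty\}$, and is nonempty since $a\in D$. An isolated point is already in $\cl(\bar c)$, and any interval contains a point of $\cl(\bar c)$ (e.g.\ the midpoint of its endpoints, or $p\pm 1$ for an unbounded end).

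For Condition G, given an existential $\phi(\bar c,\bar x)$, effective quantifier elimination applied to $\rcf$ together with the computable diagram of $\bar c$ produces an equivalent quantifier-free $\phi'(\bar c,\bar x)$; the semialgebraic set $S=\{\bar x : \mc{M}\models\phi'(\bar c,\bar x)\}$ contains a tuple algebraically independent over $\bar c$ iff $S$ has maximal dimension $n$ (else $S$ lies in a proper subvariety of $\mc{M}^n$ over $\bar c$), and this dimension is computable from $\phi'$, so $\mc{I}_{\mc{M}}(\bar c)$ is in fact uniformly decidable. For local indistinguishability, suppose $\bar x_1$ is an independent realization of $\phi$; then $\bar x_1$ lies in an $n$-dimensional cell of the cylindrical cell decomposition of $S$ over $\cl(\bar c)$, and that cell contains an open box $\prod_{i\le n}(a_i,b_i)$ with $a_i,b_i\in \cl(\bar c)\cup\{\pm\infty\}$. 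Given any other independent tuple $\bar x_2$ of length $\ge n$, we build $\bar x''=(x_1'',\ldots,x_n'')$ inductively inside $\cl(\bar c,\bar x_2)$: at stage $i$, pick any $t\in\cl(\bar c,\bar x_2)$ transcendental over $\cl(\bar c,x_1'',\ldots,x_{i-1}'')$ (available since $\cl(\bar c,\bar x_2)$ has transcendence degree $\ge n$ over $\cl(\bar c)$), and set $x_i'' := (a_i+b_i)/2 + (b_i-a_i)/(2(1+t^2))$, with analogous rational-function expressions in $t$ when an endpoint is infinite. Then $x_i''\in (a_i,b_i)\cap\cl(\bar c,\bar x_2)$ and is transcendental over $\cl(\bar c,x_1'',\ldots,x_{i-1}'')$. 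The resulting $\bar x''$ is independent over $\bar c$, lies in the same cell as $\bar x_1$, and hence realizes $\phi$.

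The main obstacle, distinguishing the real case from the algebraically closed one, is precisely local indistinguishability: the order makes distinct independent $n$-tuples over $\bar c$ generally realize distinct quantifier-free types, so one cannot appeal to a ``unique generic type'' as in $\mathrm{ACF}$. The correct observation is that an existential $\phi$ constrains its independent realizations only to lie in a \emph{full-dimensional} cell of the associated semialgebraic set, and any such cell is large enough to be populated by independent tuples drawn from $\cl(\bar c,\bar x_2)$ via the elementary order-theoretic construction above.
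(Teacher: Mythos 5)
Your proof follows essentially the same route as the paper's: reduce to the metatheorem, verify Condition B via the one-variable o-minimal decomposition into points and intervals with endpoints in $\cl(\bar c)\cup\{\pm\infty\}$, and verify Condition G via effective cell decomposition together with the observation that a $\bar c$-definable set contains a tuple independent over $\bar c$ iff its decomposition has a cell of full dimension. Two points need tightening. First, your claim that a full-dimensional cell contains an open box with corners in $\cl(\bar c)\cup\{\pm\infty\}$ is true, but not for the reason one might first reach for: $\cl(\bar c)$ is generally \emph{not} dense in $\mc{M}$. It holds because $\cl(\bar c)\preceq\mc{M}$ by model completeness of $\rcf$, so the nonempty $\bar c$-definable open set of corner-tuples of boxes contained in the cell must meet $\cl(\bar c)^{2n}$; the paper sidesteps this by working directly with the cell's defining interval and functions and substituting $b_{i+1}'=f_{i+1}(b_1',\ldots,b_i')\,b_{i+1}+g_{i+1}(b_1',\ldots,b_i')\,(1-b_{i+1})$. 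Second, Definition \ref{defn:loc} demands the nested condition $w_i\in\cl(\bar c,v_1,\ldots,v_i)$ (this is what the $\omega$-least-span bookkeeping in the metatheorem actually uses), whereas your ``pick any $t\in\cl(\bar c,\bar x_2)$ transcendental over the previous choices'' only yields $w_i\in\cl(\bar c,\bar v)$. The fix is to take $t=v_i$ at stage $i$, making $x_i''$ interalgebraic with $v_i$ over $\cl(\bar c,x_1'',\ldots,x_{i-1}'')$, exactly as in the paper's substitution. With those repairs your argument is correct and matches the published one in substance.
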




\subsubsection{Torsion-free abelian groups}  The results of Nurtazin~\cite{Nurtazin74b} Dobrica~\cite{Dobritsa83} and Goncharov~\cite{Goncharov82} mentioned above can be summarized in one theorem:

\begin{thm}\label{th:tfag}\nonitalics The class of computable torsion-free abelian groups  has the  Mal{\textquotesingle}cev property with respect to linear independence over~$\mathbb{Z}$.
\end{thm}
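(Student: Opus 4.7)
The plan is to invoke Theorem~\ref{main}. Let $\mc{M}$ be a computable torsion-free abelian group of infinite rank and define the pure closure $\cl(A) = \{x \in \mc{M} : nx \in \langle A\rangle_{\mathbb{Z}} \text{ for some } n > 0\}$. First I would check that $\cl$ is an r.i.c.e.~pregeometry on the class of torsion-free abelian groups: it is defined by the computable $\Sigma_1$ infinitary formula asserting the existence of $n>0$, integers $m_1,\dots,m_k$, and $a_1,\dots,a_k \in A$ with $nx = \sum m_i a_i$, and the exchange property for $\mathbb{Z}$-linear independence in torsion-free groups is classical. It then suffices to verify Conditions G and B for $\mc{M}$.

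For Condition B, suppose $\mc{M}\models\exists\bar{y}\,\psi(\bar{c},\bar{y},a)$ with $\psi$ quantifier-free and $\bar{c}$ containing at least one independent element $c_1$ (as permitted after localizing at $m=1$). My plan is to put $\psi$ in disjunctive normal form and handle one disjunct at a time. A disjunct is a conjunction of $\mathbb{Z}$-linear equations and disequations in $\bar{c},\bar{y},a$. If $a \in \cl(\bar{c})$, take $b=a$. Otherwise I would take $b = Nc_1 \in \cl(\bar{c})$, where $N$ is divisible by all integer coefficients and primes appearing in the disjunct, and then exhibit new witnesses $\bar{y}'$ inside $\cl(\bar{c})$ satisfying the disjunct with $a$ replaced by $b$; the $\mathbb{Z}$-multiples of $c_1$ in $\cl(\bar{c})$ give enough freedom to absorb the finitely many linear constraints imposed.

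For Condition G, I would first argue that the independence diagram $\mc{I}_{\mc{M}}(\bar{c})$ is c.e.\ uniformly in $\bar{c}$: one enumerates existential formulas and searches for witnessing independent tuples $\bar{x}$ over $\bar{c}$ together with satisfying instantiations $\bar{y}$ in $\mc{M}$; such searches are $\Sigma_1$, since both existential satisfaction and the exhibition of independent tuples of a given length (available by infinite rank) are computably enumerable. For local indistinguishability, the key algebraic observation is that in a torsion-free abelian group of infinite rank, after localizing at a $\bar{c}$ containing sufficiently many independent elements, any two independent tuples $\bar{x},\bar{x}'$ over $\bar{c}$ admit isomorphic pure closures over $\bar{c}$ in $\mc{M}$, so existential witnesses in $\cl(\bar{c},\bar{x})$ transport to witnesses in $\cl(\bar{c},\bar{x}')$.

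The main obstacle will be this last local indistinguishability step, since independent elements of a general torsion-free abelian group may carry different characteristics ($p$-height sequences) in $\mc{M}$. My plan to resolve it is to exploit that any given existential formula involves only finitely many primes, so we may require the localizing tuple $\bar{c}$ to contain independent elements whose $\cl$-span realizes the relevant finite fragment of the divisibility data of $\mc{M}$; all independent tuples over such an enriched $\bar{c}$ will then be indistinguishable on the finite piece of structure that the formula inspects, and Theorem~\ref{main} applies.
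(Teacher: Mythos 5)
Your overall architecture (pure closure as the r.i.c.e.\ pregeometry, then verify Conditions B and G and invoke Theorem~\ref{main}) matches the paper, but there is a genuine gap at the heart of Condition G. You claim the independence diagram is c.e.\ because one can ``search for witnessing independent tuples $\bar{x}$ over $\bar{c}$''; but $\mathbb{Z}$-linear independence is a $\Pi^0_1$ property (one must verify that \emph{no} nontrivial integer combination vanishes), so your search is only $\Sigma^0_2$, not $\Sigma^0_1$. This is precisely the difficulty the paper's proof is built to overcome: it characterizes membership of $\exists\bar y\,\psi(\bar c,\bar x,\bar y)$ in $\mc{I}_{\mc M}(\bar c)$ by the existence of a \emph{finite partial subgroup} $H\subseteq\mc M$ decomposing as a direct sum of partial copies of $\mathbb{Z}\upharpoonright k$ generated by tuples $\bar g,\bar h$, with $\bar c$ in the $\bar g$-part, $H\models\exists\bar y\,\psi(\bar c,\bar a,\bar y)$, and the formal representations of $\bar a$ linearly independent over those of $\bar c$. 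Searching for such a finite configuration is genuinely c.e., and the equivalence is proved by re-embedding $H$ along a genuinely independent tuple (which exists since the rank is infinite). Without some such finitary witness for independence, your enumeration does not go through.

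Two further points. For local indistinguishability, your proposed fix (enriching $\bar c$ per formula to capture divisibility data) does not fit the framework --- the localization in Condition G is at a single fixed finite set, not formula-by-formula --- and it also misses the actual mechanism: the paper writes the pure closure of $\bar u$ over $\bar c$ as $C\oplus U_1\oplus\cdots\oplus U_n$ with generators $h_i$ and $u_i=k_ih_i$, then maps $h_i\mapsto v_i$; the resulting tuple $w_i=k_iv_i$ lies in $\cl(\bar c,v_1,\dots,v_i)$, is interdependent with $v_i$, and automatically carries the finite divisibility structure of $\bar u$ witnessed inside $\mc X$, so no hypothesis on characteristics is needed. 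For Condition B, choosing $b=Nc_1$ \emph{first} and then solving for witnesses can fail: an equation such as $2y_1=a-c_1$ forces $b-c_1$ to be divisible by $2$ in $\mc M$, which your choice of $N$ need not arrange. The paper avoids this by decomposing the finitely generated subgroup as $C\oplus W$, rewriting $\psi$ in free generators (so all equations become trivial identities preserved under \emph{any} substitution for the $W$-generators), and only then choosing multiples $mu$ of a fixed $u\in C$ to dodge the finitely many disequations; the value of $b$ is then determined by the substitution rather than chosen in advance.
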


\noindent In contrast to the number of computable copies and existence of a ``good'' copy,  existence of a ``bad'' copy is a very recent fact that can be found in \cite{Melnikov}. We give a proof, using our metatheorem, in \S \ref{sec:tfag}.

\subsubsection{Archimedean ordered abelian groups} Using a relatively involved combinatorial argument, Goncharov, Lempp and Solomon~\cite{GoncharovLemppSolomon03} showed that every computable Archimedean ordered abelian group has a computable copy with a computable base and that in the case of infinite rank it has infinitely many effectively distinct computable presentations.
Using model-theoretic properties of $(\mathbb{R}, +, \leq)$, we extend their results in \S \ref{sec:oag}:

\begin{thm}\label{th:aoag}\nonitalics The class of computable Archimedean ordered abelian groups has the  Mal{\textquotesingle}cev property with respect to linear independence over~$\mathbb{Z}$.
\end{thm}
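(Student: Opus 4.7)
The plan is to apply the metatheorem~\ref{main} to the class $\mathcal{K}$ of computable Archimedean ordered abelian groups, with $\cl$ the $\mathbb{Z}$-linear pregeometry $\cl(\bar a)=\{b\in\mathcal{M}:\exists n\geq 1,\exists\bar m\in\mathbb{Z}^{|\bar a|}\,(nb=\sum m_i a_i)\}$. This is a r.i.c.e.\ pregeometry because its membership relation is the computable $\Sigma_1$ formula $\bigvee_{n\geq 1}\exists\bar m\,(nb=\sum m_i a_i)$ in the language of ordered abelian groups, so it remains to verify Conditions~G and~B for every infinite-rank $\mathcal{M}\in\mathcal{K}$. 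The key external inputs are (i) H\"older's theorem, which embeds $\mathcal{M}$ order-preservingly into $(\mathbb{R},+,\leq)$, and (ii) the classical fact that the theory DOAG of nontrivial divisible ordered abelian groups is complete, decidable, and admits quantifier elimination in $(+,<,0)$. Writing $\widehat{\mathcal{M}}\subseteq\mathbb{R}$ for the divisible hull of $\mathcal{M}$, QE rewrites any $\exists\bar y\,\psi(\bar c,\bar y,\bar z)$ over $\widehat{\mathcal{M}}$ as a qf condition $\theta(\bar c,\bar z)$ cutting out a finite union of $\mathbb{Q}\bar c$-rational intervals and points in $\mathbb{R}$.

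For Condition~B I would take $m=2$. If $\bar c$ contains two $\mathbb{Z}$-independent entries $c_1,c_2$, their ratio in $\mathbb{R}$ is irrational, so $\mathbb{Z}c_1+\mathbb{Z}c_2\subseteq\cl(\bar c)$ is dense in $\mathbb{R}$ and therefore in $\mathcal{M}$. Given $a\in\mathcal{M}$ with $\mathcal{M}\models\exists\bar y\,\psi(\bar c,\bar y,a)$, density supplies candidates $b\in\cl(\bar c)$ with the same qf DOAG-type $\theta(\bar c,\cdot)$ as $a$; fine-tuning $b$ by adding an element of $\mathbb{Z}c_1+\mathbb{Z}c_2$ divisible by a suitable $N$ (controlled by the coefficients in $\psi$) arranges $b$ to lie in the same divisibility class as $a$ within $\mathcal{M}$, so the required witnesses $\bar y$ exist in $\mathcal{M}$. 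For Condition~G, $\mathcal{I}_{\mathcal{M}}(\bar c)$ is computably enumerable uniformly in $\bar c$ by decidability of DOAG: for each existential $\phi(\bar c,\bar y)$, one decides whether $\phi\wedge$ ``$\bar y$ is $\mathbb{Q}$-independent over $\bar c$'' is consistent in DOAG, and when it is, the infinite rank of $\mathcal{M}$ together with density produces a realizing independent tuple inside $\mathcal{M}$. Local indistinguishability reduces to the homogeneity of DOAG on independent tuples: any two $\mathbb{Q}$-independent tuples of the same length in $\widehat{\mathcal{M}}$ are conjugate by a DOAG-automorphism, so any existential formula satisfied by an independent tuple in $\mathcal{M}$ is realized by an independent tuple inside $\cl(\bar c)$ for any independent $\bar c$ of sufficient length, again via density.

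The main obstacle is divisibility: since $\mathcal{M}$ need not itself be divisible, existential formulas in $\mathcal{M}$ encode more than the qf DOAG-types handled by quantifier elimination, and a $b\in\cl(\bar c)$ of the correct DOAG-type over $\bar c$ need not admit $\bar y$-witnesses in $\mathcal{M}$. Handling this cleanly---essentially by showing that the pure subgroup $\cl(\bar c)=\mathbb{Q}\bar c\cap\mathcal{M}$ already realizes every divisibility class of $\mathbb{Q}\bar c$-linear combinations that $\mathcal{M}$ itself realizes---is where the bulk of the technical work will sit.
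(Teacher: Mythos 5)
Your overall frame---apply Theorem~\ref{main}, embed $\mc{M}$ into $(\mathbb{R},+,\leq)$ via H\"older, localize at two independent elements of $\bar c$ so that $\cl(\bar c)\cap\mc{M}$ is dense in $\mathbb{R}$---is the paper's. But routing the verification of Conditions G and B through quantifier elimination for DOAG on the divisible hull $\widehat{\mc{M}}$ creates a genuine gap that you flag but do not close, and in the case of Condition G the proposed test is simply wrong. Membership of $\exists\bar y\,\psi(\bar c,\bar y,\bar x)$ in $\mc{I}_{\mc{M}}(\bar c)$ requires the witnesses $\bar y$ to exist \emph{in $\mc{M}$}, and $\mc{M}$'s divisibility relations are invisible to DOAG. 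Concretely, take $\psi(\bar c,y,x)\equiv (y+y=c_1)\wedge(x=x)$: the conjunction of $\exists y\,\psi$ with ``$x$ independent over $\bar c$'' is always DOAG-consistent, yet the formula lies in $\mc{I}_{\mc{M}}(\bar c)$ only when $c_1$ is $2$-divisible in $\mc{M}$ (false, e.g., when $\mc{M}$ is a dense free abelian subgroup of $\mathbb{R}$ and $c_1$ is a basis element). An over-approximation of $\mc{I}_{\mc{M}}(\bar c)$ is fatal for the metatheorem: the construction of $\mc{G}$ uses the enumeration of the independence diagram to certify that its current commitments (property (\ref{P5})) can be realized by an independent tuple in $\mc{M}$, so enumerating unrealizable formulas stalls the construction. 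The same defect underlies your Condition~B sketch: a $b\in\cl(\bar c)$ with the right qf DOAG-type over $\bar c$ need not admit $\bar y$-witnesses in $\mc{M}$, and your proposed repair---that $\cl(\bar c)\cap\mc{M}$ realizes every divisibility class of $\mathbb{Q}\bar c$-combinations that $\mc{M}$ realizes---is asserted, not proved, and is not obviously either true or sufficient once $\psi$ imposes several simultaneous congruence constraints linking $z$, $\bar c$, and the $\bar y$'s.

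The paper avoids the divisible hull entirely and never needs to analyze divisibility classes. For Condition~B it fixes an actual witness tuple $\bar w$ for $a$, passes to the finitely generated (hence free) subgroup $\mc{X}=\langle\bar c,a,\bar w\rangle=C\oplus W$ with independent generators $\bar g\bar h$, rewrites $\psi$ in these generators (all equations become trivial by independence, leaving an open order condition), and uses density of $C$ to replace $\bar h$ by $\bar h'\in C$; then $b$ \emph{and its witnesses} are recovered as the same integer combinations of $\bar g\bar h'$, so they automatically lie in $\mc{M}$ and satisfy the quantifier-free $\psi$. For Condition~G it enumerates $\mc{I}_{\mc{M}}(\bar c)$ by searching inside $\mc{M}$ for a finite \emph{partial} subgroup $H=C\oplus W$ of the appropriate truncated-free shape containing $\bar c$, $\bar a$, and the witnesses, with the formal representation of $\bar a$ independent over that of $\bar c$; this is a $\Sigma^0_1$ search over the structure itself and is both sound and complete precisely because the certificate lives in $\mc{M}$. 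If you want to salvage your route, you would have to replace DOAG-consistency by satisfiability in $\mc{M}$ of the formula together with independence---which is exactly what the finite-partial-subgroup search accomplishes.
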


\noindent We note that the existence of a computable presentation with no computable basis is a new result.

\section{Preliminaries}

\subsection{Pregeometries}\label{sec:pre} A dependence relation often induces  a \emph{pregeometry}.
Let $X$ be a set and $\cl: \mc{P}(X) \to \mc{P}(X)$ a function on $\mc{P}(X)$. We say that $\cl$ is a \textit{pregeometry} if:
\begin{enumerate}
	\item $A \subseteq \cl(A)$ and $\cl(\cl(A)) = \cl(A)$,
	\item $A \subseteq B \Rightarrow \cl(A) \subseteq \cl(B)$,
	\item (finite character) $\cl(A)$ is the union of the sets $\cl(B)$ where $B$ ranges over finite subsets of $A$, and
	\item (exchange principle) if $a \in \cl(A \cup \{b\})$ and $a \notin \cl(A)$, then $b \in \cl(A \cup \{a\})$.
\end{enumerate}
An operation which satisfies the first two properties is called a \textit{closure operator}.  Let $(X,\cl)$ be a pregeometry, and $A \subseteq X$. We say that:
\begin{itemize}
\item[i.] $A$ is \textit{closed} if $A = \cl(A)$;
\item[ii.]  $A$ \textit{spans} a set $B \subseteq A$ if $B \subseteq \cl(A)$;
\item[iii.]  $A \subseteq X$ is \textit{independent} if for all $a \in A$, $a \notin \cl(A \backslash \{a\})$, and  $A$ is \textit{dependent} otherwise;
\item[iv.]  $B$ is a \textit{basis} for $Y \subseteq X$ if $B$ spans $Y$ and is independent. 

\end{itemize}

 One can show that $B$ is a basis for $Y$ if, and only if, $B$ is a maximal independent set contained in $Y$. A standard argument shows that every set has a basis, and that every basis for $Y$ has the same cardinality. This cardinality is the \textit{dimension} of $Y$, written  $\dim(Y)$.  We can also generalize iii~and define the associated notion of independence over a subset $C$ of  $X$.  We cite \cite{Pillay96} for more information about pregeometries.

Because a pregeometry has finite character, all of the information of the pregeometry is captured in the relations (for each $n$) $x \in \cl(y_1,\ldots,y_n)$ on finite tuples.   We will say that $\cl$ is \textit{c.e.}~(\textit{computable}) if each of the relations $x \in \cl(\{y_1,\ldots,y_n\})$ are computably enumerable (computable, respectively) uniformly in $n$.

\begin{defn}\label{def:rice-pre} A pregeometry $\cl$ on a structure $\mc{M}$ is relatively intrinsically computably enumerable~(r.i.c.e.) if the relations $x \in \cl(\{y_1,\ldots,y_n\})$ are  c.e.\ uniformly in $n$ within, and relative to,  any presentation of $\mathcal{M}$.
\end{defn}
\noindent  All standard examples of pregeometries (vector spaces, fields, etc.)~are r.i.c.e.\ pregeometries.
As we have already mentioned in the introduction, 
it follows from~\cite{AshKnightManasseSlaman89} and~\cite{Chisholm90} that there is a tuple $\bar{d}$ in $\mc{M}$ such that these relations are uniformly defined by  computable infinitary $\Sigma^c_1$ formulas with parameters $\bar{d}$ (the uniformity comes from a small modification to the same proof -- see, for example, \cite{Montalban12}). See \cite{AshKnight00} for a background on computable infinitary logic $\mathcal{L}^c_{\omega_1 \omega}$.
The easy proposition below may help the reader to develop some intuition.

\begin{prop}\label{comp-pregeometry}\nonitalics
Let $(X,\cl)$ be a pregeometry, with $X \subseteq \omega$ a computable set and $\cl$ c.e. Then $(X,\cl)$ has a computable basis if and only if $\cl$ is computable.
\end{prop}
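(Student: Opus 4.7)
The plan is to handle the two directions separately; the $(\Leftarrow)$ direction is a standard greedy construction, while the $(\Rightarrow)$ direction is the one that requires some care.

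For $(\Leftarrow)$, assume $\cl$ is computable. Enumerate the computable set $X$ as $x_0, x_1, \ldots$ and define $B$ in stages: at stage $i$, place $x_i$ into $B$ iff $x_i$ lies outside the closure of the (finitely many) previously selected elements. Each such test is decidable by hypothesis, so $B$ is computable. By construction every $x_i$ lies in $\cl(B)$, so $B$ spans $X$. A short induction using exchange shows $B$ stays independent throughout: if adding $x_i$ first created a dependence involving some earlier $x_j \in B$, then exchange would force $x_i$ itself into the closure of the yet-earlier elements of $B$, contradicting the rule that admitted $x_i$.

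For $(\Rightarrow)$, assume $B$ is a computable basis. I would decide $x \in \cl(\{y_1,\ldots,y_n\})$ via the pregeometry identity $x \in \cl(\bar{y}) \iff \dim(\cl(\bar{y},x)) = \dim(\cl(\bar{y}))$, so it is enough to compute the function $\bar{y} \mapsto \dim(\cl(\bar{y}))$. My strategy is to show that both ``$\dim(\cl(\bar{y})) \leq d$'' and ``$\dim(\cl(\bar{y})) \geq d$'' are c.e.\ uniformly in $(\bar{y},d)$; the true value is bounded by $|\bar{y}|$, so running the enumerations in parallel for each candidate $d$ pins it down exactly.

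The upper bound is immediate: $\dim(\cl(\bar{y})) \leq d$ iff some sub-tuple of $\bar{y}$ of size at most $d$ spans $\bar{y}$, which unfolds to a finite disjunction of c.e.\ conditions of the form $y_j \in \cl(\{y_{i_1},\ldots,y_{i_d}\})$. The lower bound is where the computable basis enters. Since $\cl$ is c.e.\ and $B$ spans $X$, I can computably find a finite $F \subseteq B$ with $\bar{y} \subseteq \cl(F)$; then $F$ is an independent spanning set for $\cl(F)$ of some size $m$. Steinitz exchange extends any independent $d$-subset of $\cl(\bar{y}) \subseteq \cl(F)$ to a basis of $\cl(F)$ by adjoining $m-d$ elements of $F$, while submodularity of dimension in a pregeometry (itself a consequence of exchange) supplies the converse inequality. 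Hence $\dim(\cl(\bar{y})) \geq d$ is equivalent to the existence of $G \subseteq F$ with $|G| = m-d$ and $\cl(\bar{y} \cup G) = \cl(F)$; since $\bar{y} \cup G \subseteq \cl(F)$ holds automatically, this equality reduces to the c.e.\ condition $F \subseteq \cl(\bar{y} \cup G)$, i.e., $b \in \cl(\bar{y} \cup G)$ for each of the finitely many $b \in F$. The main obstacle is exactly this last step: \emph{a priori} the lower-bound predicate is $\Pi^0_1$, and it is the availability of a computable basis together with Steinitz exchange and submodularity that converts it into a finite disjunction of c.e.\ statements.
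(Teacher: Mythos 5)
Your proof is correct and follows essentially the same route as the paper: the $(\Leftarrow)$ direction is the standard greedy construction, and the key step in $(\Rightarrow)$ --- turning the a priori $\Pi^0_1$ lower bound on dimension into a c.e.\ condition by exchanging the given tuple against a finite subset $F$ of the computable basis that spans it --- is exactly the paper's criterion, which states that $x_1,\ldots,x_n$ are independent iff some $a_1,\ldots,a_n \in F$ lie in $\cl(\{x_1,\ldots,x_n\}\cup(F\setminus\{a_1,\ldots,a_n\}))$, i.e.\ your condition with $d=n$. Routing through the dimension function rather than the dependence predicate is only a cosmetic difference.
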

\begin{proof}
A standard argument shows that a computable pregeometry has a computable basis. Now suppose $B$ is a computable basis for $(X,\cl)$.
To decide whether $x_1, \ldots, x_n$ are dependent or not, find a finite set $A \subseteq B$ such that $\{x_1,\ldots,x_n\} \subseteq \cl(A)$. Then $x_1, \ldots, x_n$ are independent if, and only if, there exist  $a_1, \ldots, a_n \in A$ such that $$a_1, \ldots, a_n \in \cl (\{x_1, \ldots, x_n\} \cup (A \setminus \{a_1, \ldots, a_n\} )).$$
The latter is a c.e.\ property. Since for $\{x_1,\ldots,x_n\}$ being dependent is c.e.\ as $\cl$ is c.e., we conclude that determining whether or not a tuple is dependent is computable. From this, we can compute the closure relation.
 \end{proof}

%
%

 \subsection{\texorpdfstring{The notion of $k$-dependence}{The notion of k-dependence}} 
 We will be using a computable way of approximating a r.i.c.e.~pregeometry.
 We fix a r.i.c.e.\ pregeometry $(\mc{M},\cl)$ upon a computable infinite structure $\mc{M}$.
 Since the relations $x \in \cl(\{y_1,\ldots,y_n\})$ are r.i.c.e., we have 
\[ x \in \cl(\{y_1,\ldots,y_n\}) \Leftrightarrow \mc{M} \models \bigdoublevee_{k \in S_n} (\exists \bar{z}) \phi_k(\bar{d},x,y_1,\ldots,y_n,\bar{z}) \]
where $S_n$ are c.e.\ sets of indices for open formulas with parameters $\bar{d}$, given uniformly (using a standard forcing argument one can show that all the formulas $\phi_k$ indeed share the same tuple of parameters $\bar{d}$). To simplify our notation, we suppress $\bar{d}$ in $\phi_k$. We approximate the relations $x \in \cl(\{y_1,\ldots,y_n\})$  as follows.

\begin{defn}\label{def:clk}
We  say that \textit{$x$ is $\T$-dependent on $\{y_1,\ldots,y_n\}$} and write $x \in \cl_\T(\{y_1,\ldots,y_n\})$ if $x$ comes from among the first $\T$-many elements of $\mc{M}$ and
$$\mc{M} \models \bigdoublevee_{i \leq k, i \in S_n}(\exists \bar{z} \in \mc{M}\upharpoonright k) \phi_{i}(\bar{d},x,y_1,\ldots,y_n,\bar{z}) $$ 
(i.e., the witnesses $\bar{z}$ come from among the first $\T$ elements of $\mc{M}$).

\end{defn}

Similarly, we say that a set $X$ is \textit{$\T$-dependent} if for some $x \in X$, $x \in \cl_\T(X \backslash \{x\})$, and otherwise we say that $X$ is \textit{$\T$-independent}. It is clear that $\cl_k$
is a computable operator which, however,  typically is not a pregeometry. Moreover, $\cl_k(X)$ is finite for every set $X$.

\subsection{Locally indistinguishable tuples} 


%



The definition below clarifies the informal discussion from the introduction. 
We say that a tuple $\bar{x}$ is independent if the set of its components is independent. Independence over a tuple is defined similarly.

\begin{defn}\label{defn:loc} We say that independent tuples in $\mc{M}$ are \emph{locally indistinguishable} if for every tuple $\bar{c}$ in $\mc{M}$ and $\bar{u}$, $\bar{v}$ independent tuples over $\bar{c}$,  
for each existential formula $\phi$ such that $\mc{M} \models \phi(\bar{c}, \bar{u})$, there exists a tuple $\bar{w}$ that is independent over $\bar{c}$, has $\mc{M} \models \phi (\bar{c}, \bar{w})$, and (with $\bar{w} = (w_1,\ldots,w_n)$ and $\bar{v} = (v_1,\ldots,v_n)$) we have $w_i \in \cl (\bar{c},v_1,\ldots,v_i)$ for $i = 1,\ldots,n$.\footnote{In general, we will use $w_i$ to denote the entries of a tuple $\bar{w}$ in this way.}
\end{defn}

Thus, even if $\bar{u}$ and $\bar{v}$ can be separated by an $\exists$-formula, we can always find independent witnesses for any $\exists$-formula true of $\bar{u}$ within the $\cl$-span of $\bar{v}$. The conditions $\bar{w}$ is independent over $\bar{c}$ and $w_i \in \cl (\bar{c},v_1,\ldots,v_i)$ are equivalent to $w_i$ and $v_i$ being interdependent over $\bar{c},v_1,\ldots,v_{i-1}$ (and hence also over $\bar{c},w_1,\ldots,w_{i-1}$).

\section{Proof of the metatheorem}
\label{sec:main-thm}

\def\G{\mc{G}}
\def\M{\mc{M}}
\newcounter{contenumi}
\def\om{\omega}
\def\bbar{\bar{b}}
\def\cbar{\bar{c}}
\def\ubar{\bar{u}}
\def\vbar{\bar{v}}
\def\xbar{\bar{x}}
\def\ybar{\bar{y}}
\def\Si{\Sigma}
\def\isom{\cong}


We fix a computable $\mathcal{M}$ and a r.i.c.e.~pregeometry $\cl$ on $\mc{M}$.
This section is devoted to a proof of Theorem~\ref{main}.
 We first prove that  Condition G implies that $\mathcal{M}$ has a computable copy with a computable basis, and then we show that Condition~B guarantees the existence of a copy with no computable basis.

 \subsection{A computable copy with a computable basis}

Recall that the \emph{independence diagram} $\mc{I}_{\mc{M}}(\bar{c})$ of $\bar{c}$ in $\mc{M}$ is the collection of all existential first-order formulas $\exists \bar{y} \psi(\bar{c}, \bar{y}, \bar{x})$ such that $\mc{M} \models \exists \bar{y} \psi(\bar{c}, \bar{y}, \bar{a})$ for some tuple $\bar{a} \in \mc{M}$ independent  over $\bar{c}$. By our assumption, $(\mathcal{M}, \cl)$ satisfies:
 
\medskip

\noindent  \textbf{Condition G:} Independent tuples are locally indistinguishable in $\mc{M}$ and for each $\mc{M}$-tuple $\bar{c}$, $\mc{I}_{\mc{M}}({\bar{c}})$ is computably enumerable uniformly in~$\bar{c}$.

\medskip


 \begin{prop}\label{pr:G}\nonitalics Suppose that $\mc{M}$ is a computable structure, and let $\cl$ be a r.i.c.e.~pregeometry on $\mc{M}$. If $(\mc{M}, \cl)$ satisfies Condition G, then there exists a computable copy $\mc{G}$ of $\mc{M}$ having a computable basis. Furthermore, $\mc{G} \cong_{\Delta^0_2} \mc{M}$.
 \end{prop}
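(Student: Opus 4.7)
The plan is to build $\mc{G}$ on domain $\omega$ by stages, with a designated computable infinite subset $B \subseteq \omega$ (say $B = \{2i : i \in \omega\}$) that is forced to be a basis of $\mc{G}$, while simultaneously defining a $\Delta^0_2$ isomorphism $\phi : \mc{G} \to \mc{M}$ by a back-and-forth construction. The back steps ensure that each $b \in B$ is sent to a fresh element of $\mc{M}$ that is independent over the current image of $B \cap \dom(\phi_s)$; such elements exist because $\mc{M}$ has infinite dimension and can be located in the limit using the c.e.\ closure relation. The forth steps ensure every $m \in \mc{M}$ eventually enters the range of $\phi$ and decide atomic facts on the element of $\omega$ chosen as its preimage.

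Condition G is what lets us commit to the atomic diagram of $\mc{G}$ coherently. Local indistinguishability means that the chosen independent image $\bar{u} = \phi_s(\bar{b})$ behaves generically: any existential formula realised by an independent tuple over $\phi_s(\bar{b})$ in $\mc{M}$ can be realised within the $\cl$-span of any other independent tuple of the same length over $\phi_s(\bar{b})$. The uniform enumerability of $\mc{I}_{\mc{M}}(\bar{c})$ then lets us enumerate the existential formulas $\exists \bar{y}\,\psi(\bar{c},\bar{y},\bar{x})$ consistent with adding an independent continuation $\bar{x}$. Concretely, when a forth step needs to add $m_s \in \mc{M}$ to the range we run the $\cl_k$-approximation of Definition~\ref{def:clk} together with the enumeration of existential formulas involving $m_s$ over the current image $\bar{u}$, until we can commit to a formula $\exists \bar{y}\,\varphi(\bar{u},\bar{y},m_s)$ that pins $m_s$ down up to the pregeometry; we then realise the same formula inside $\mc{G}$ using a fresh element $g \in \omega \setminus B$ and fresh witnesses.

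The main obstacle is that dependence in $\mc{M}$ is only c.e., so we may wrongly guess $m \in \mc{M}$ to be independent over $\bar{u}$ and later discover a dependence. When this happens---that is, when we have tentatively set $\phi_s(b) = m$ for some $b \in B$ and then see a formula witnessing $m \in \cl(\bar{u})$---we invoke local indistinguishability to swap $m$ with some previously chosen independent $u_i$, so that in the revised map $u_i$ becomes the dependent element (moved to $\omega \setminus B$ on the $\mc{G}$ side) and a genuinely independent element takes the place of $m$ above $b$. The swap is consistent with the history of the construction because, by local indistinguishability, the existential facts previously enumerated on $\bar{u}$ transfer verbatim to the new independent substitute. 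Each value $\phi_s(x)$ then changes only finitely many times, so $\phi = \lim_s \phi_s$ is well defined and $\Delta^0_2$; in the limit $\phi$ is a genuine isomorphism, hence $B$ is a basis of $\mc{G}$, and $\mc{G}$ is computable with computable basis $B$ by construction. The bookkeeping required to guarantee that these revisions never conflict with atomic facts already committed to in $\mc{G}$ is what I expect to be the main technical difficulty.
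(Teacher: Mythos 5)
Your overall architecture --- movable markers for the intended basis, revision upon discovering a dependence, and the independence diagram plus local indistinguishability as the device for transferring commitments to a replacement tuple --- is the same as the paper's. But two essential ideas are missing, and the first is precisely the point you defer as ``the main technical difficulty.'' \emph{Safeness}: it is not enough to consult the independence diagram when a new element enters the range. Before committing $\mc{G}$ to any atomic fact involving the candidate basis images $u_{i+1},\dots,u_s$ (or elements of their span beyond the already-secured parameters $\bar{c}$), you must wait until the single existential formula $\psi_i(\bar{c},x_{i+1},\dots,x_s)$ recording \emph{all} commitments made so far actually appears in $\mc{I}_{\mc{M}}(\bar{c})$ --- a $\Sigma^0_1$ event, so the wait is legitimate. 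Only this guarantees that, when $u_{i+1},\dots,u_s$ later turn out to be dependent, \emph{some} independent tuple over $\bar{c}$ satisfies $\psi_i$; local indistinguishability then lets you realize $\psi_i$ by an independent tuple inside the span of any fresh independent tuple, and you re-map the non-basis elements to the new existential witnesses so that no atomic fact already frozen in $\mc{G}$ changes. Without this precaution your ``swap'' can simply have no target: the committed atomic facts might be realizable only by dependent tuples, and the construction is stuck.

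The second missing idea concerns convergence and spanning. Independence is $\Pi^0_1$, so ``a fresh element that can be located in the limit using the c.e.\ closure relation'' is not yet a construction: each replacement can again be secretly dependent, and a marker could move infinitely often. You need a canonical target; the paper uses an $\omega$-least-span requirement, approximating the $\omega$-least basis $m_0,m_1,\dots$ by computable $n_i[t]$ and insisting that the chosen tuple be (approximately) interdependent with $n_0[t],\dots,n_i[t]$. Once the $n_j[t]$ have settled, any tuple passing this test is genuinely independent, which is what forces the markers to stabilize. The same device repairs a further gap you leave open: an infinite independent image of $B$ need not \emph{span} $\mc{M}$ (compare $\{e_2,e_4,\dots\}$ in a vector space), so your $B$ could end up independent but not a basis; requiring $\omega$-least span, together with keeping the range of the partial isomorphism inside the approximate closure of the current basis images, is what makes the limit of the markers an actual basis.
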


\medskip

\begin{proof}
 It is sufficient to prove the proposition in the case where the dimension of $\mc{M}$ is infinite. We may also assume that the language is relational as any $\Sigma_1$ formula involving function symbols can be replaced by a $\Sigma_1$ formula which involves only the relation symbols for the graphs of those functions.

At stage $s$ of the construction we will define three things: a finite one-to-one map $\tau_s$ from an initial segment of $\om$ to $\M$, a finite tuple $a_0, \ldots, a_s$ inside the domain of $\tau_s$ and a number $t_s>s$.
We will also define a finite structure $\G_s$ by pulling back the structure $\M$ through $\tau_s$ (and, if the language is infinite, at stage $s$ we consider only the first $s$ many relations in the language). 
This structure will never change, and thus we will have $\G_s\subseteq\G_{s+1}$ and at the end we will get that $\G=\bigcup_s\G_s$ is a computable structure.
The elements $a_i$ will never change either, and we will make sure they end up forming a computable basis for $\G$.
The partial isomorphisms $\tau_s$ will change from stage to stage, but they will stabilize pointwise and hence they will have a $\Delta^0_2$ limit, $\tau$, which will be an isomorphism $\tau\colon \G\to\M$. 
The number $t_s$ represents how much we have looked into $\M$ to guess which elements are independent and which are not.
Of course we would like to be able to pick $\tau_s(a_0), \ldots, \tau_s(a_s)$ so that they are independent in $\M$ from the beginning, but since this is a $\Pi^0_1$ property, we will not be always correct.
For starters, we choose $\tau_s(a_0), \ldots, \tau_s(a_s)$ to be $t_s$-independent (see Definition \ref{def:clk}).
But to make sure we can recover from our mistakes we will need to ask for some extra assurances.
At each stage $s$, $\tau_s$, $a_0, \ldots, a_s$, and $t_s$ will satisfy properties (\ref{P1})-(\ref{P6}) which we describe below.
We start with the most obvious ones.

\subsubsection*{Basic properties} 

Let us start with independence:

\begin{enumerate}\renewcommand{\theenumi}{P\arabic{enumi}}
\item $\tau_s(a_0), \ldots, \tau_s(a_s)$ are $t_s$-independent.               \label{P1}
\end{enumerate}

Once we show that the values of $\tau_s(a_i)$ eventually stabilize, property (\ref{P1}) guarantees that their limits $\tau(a_0), \tau(a_1),....$ are independent in $\M$.
Therefore, $a_0, a_1,...$ will end up being independent in $\G$.

The second property is about the range of $\tau_s$.

\setcounter{contenumi}{\value{enumi}} \begin{enumerate} \setcounter{enumi}{\value{contenumi}}\renewcommand{\theenumi}{P\arabic{enumi}}
\item $\{0,..,s-1\}\subseteq range (\tau_s) \subseteq \cl_{t_s}(\tau_s(a_0), \ldots, \tau_s(a_s))$.    \label{P2}
\end{enumerate}

As one might expect, property (\ref{P2}) will be useful to show that $\tau$ is onto $\M$.
Also, the right inclusion will help us to keep things tight when we want to prove that the $\tau_s$'s stabilize pointwise.

\subsubsection*{Compatibility}

Usually, in this kind of construction, one requires that the atomic facts true in $\G_{s-1}$ remain unchanged in $\G_s$ and unchanged for the rest of the construction.
In our case, we also want that whenever some tuple is $t_{s-1}$-dependent, it will stay dependent forever.
Since $t_{s-1}$-dependence is witnessed by some $\Si_1$ formula being true in $\M_{t_{s-1}}$, we will ask for all such formulas to be preserved.

\setcounter{contenumi}{\value{enumi}} \begin{enumerate} \setcounter{enumi}{\value{contenumi}}\renewcommand{\theenumi}{P\arabic{enumi}}
\item For every tuple $\bbar$ in the domain of $\tau_{s-1}$ and every $\Si_1$ formula $\varphi(\xbar)$ (using only the first $t_{s-1}$ relation symbols) we have that    \label{P3}
\[
\M_{t_{s-1}}\models \varphi(\tau_{s-1}(\bbar))
	\quad \Rightarrow\quad
\M_{t_{s}}\models \varphi(\tau_{s}(\bbar)).
\]
\end{enumerate}

In particular, it follows from this that if $\tau_{s-1}(b)\in \cl_{t_{s-1}}(\tau_{s-1}(a_0),...,\tau_{s-1}(a_i))$, then $\tau_{s}(b)\in \cl_{t_{s}}(\tau_{s}(a_0),...,\tau_{s}(a_i))$.

\subsubsection*{Stabilization}
To make sure that the sequence of $\tau_s$ converges pointwise, we will require that once $\tau_s(a_0), \ldots, \tau_s(a_i)$ are really independent, they never change again.
Also, nobody that appears to be in their closure will change either. 
More formally: for $s>0$,

\setcounter{contenumi}{\value{enumi}} \begin{enumerate} \setcounter{enumi}{\value{contenumi}}\renewcommand{\theenumi}{P\arabic{enumi}}
\item Suppose that $b\in \dom(\tau_{s-1})$ and that $\tau_{s-1}(b) \in \cl_{t_{s-1}}(\tau_{s-1}(a_0), \ldots, \tau_{s-1}(a_i))$.
Then, we can only have $\tau_s(b)\neq \tau_{s-1}(b)$, if $\tau_{s-1}(a_0), \ldots, \tau_{s-1}(a_i)$ are $t_s$-dependent.     \label{P4}
\end{enumerate}

In particular, if $\tau_{s-1}(a_0), \ldots, \tau_{s-1}(a_i)$ stay $t_s$-independent at stage $s$, then their values won't change at $s$.
And if $\tau_{s-1}(a_0), \ldots, \tau_{s-1}(a_i)$ are actually independent, their values will never change.
Therefore, once we show that $\tau_{s}(a_0), \ldots, \tau_{s}(a_i)$ are eventually going to be independent, we get that their values eventually stabilize. 

Furthermore, by condition (\ref{P2}), we get that for every $b$ in the domain of $\tau_s$, $\tau_s(b)\in \cl_{t_{s}}(\tau_{s}(a_0), \ldots, \tau_{s}(a_s))$.
By condition (\ref{P3}) this fact will never change at future stages.
So, once $\tau_{r}(a_0), \ldots, \tau_{r}(a_s)$ become independent at some future stage $r>s$, condition (\ref{P4}) implies that $\tau_r(b)$ will never change again.

\subsubsection*{Safeness}

The idea of condition (\ref{P5}) below is that before choosing values for $\tau_s(a_0),...,\tau_s(a_s)$ we will not only ask for them to be $t_s$-independent, but also that whatever we have committed  about them is consistent with the corresponding independence diagrams, as if they were actually independent.
This way, if we later realize they were not independent, we can find some other (potentially) independent tuple which is compatible with our construction thus far.

We start by defining the formulas that describe the commitments we have made so far.
Suppose we have $(\tau_s; a_0, \ldots, a_s)$ as above.
Fix $i<s$.
For each $j$, let $u_j=\tau_s(a_j)$.
Let $\cbar$ be the tuple of elements in the range of $\tau_s$ which belong to $\cl_{t_s}(u_0,...,u_i)$.
(These are the elements that, by condition (\ref{P4}), we do not want to move if $u_0,...,u_i$ were to be independent.)
Let $\vbar$ be the elements of $\M_{t_s}$ which are neither in $\cbar$ nor in $\ubar$, that is, those elements which are not in $\bar{u}$, and which are not in the $\cl_{t_s}$-closure of $u_0,\ldots,u_i$, but (by \ref{P2}) are in the closure of $\bar{u} = (u_0,\ldots,u_s)$.
So $\M_{t_s}=\{\cbar\}\cup \{\vbar\}\cup\{u_{i+1},...,u_s\}$.
Let $\theta(\cbar, \vbar, u_{i+1},...,u_s)$ be the conjunction of the atomic diagram of $\M_{t_s}$ (using only the first $t_s$ relation symbols).
Define
\[
\psi_i(\cbar,x_{i+1},...,x_s) \leftrightharpoons \exists \ybar\theta(\cbar, \ybar,x_{i+1},...,x_s). 
\]
Note that $\psi_i$ is an $\exists$-formula with parameters $\cbar$, and with $s-i$ indeterminates $x_{i+1},...,x_s$.

\setcounter{contenumi}{\value{enumi}} \begin{enumerate} \setcounter{enumi}{\value{contenumi}}\renewcommand{\theenumi}{P\arabic{enumi}}
\item For each $i<s$, $\psi_i(\cbar,\xbar)$ belongs to the independence diagram of $\cbar$.    \label{P5}
\end{enumerate}

Recall that Condition G tells us that these independence diagrams are uniformly c.e., and hence we can always wait to see if a formula shows up in one.
Thus condition (\ref{P5}) is $\Si^0_1$.
All the previous conditions were computable.

Also, let us remark that if $u_0,...,u_s$ are independent in $\M$, then these formulas do belong to the corresponding independence diagrams, and hence (\ref{P5}) holds.

\subsubsection*{Least Span}
We need this last property to guarantee that, as we are moving the values of $\tau_s(a_0),...,\tau_s(a_i)$ around, we will eventually fall on one that is actually independent.
Also, it will help us ensure that they form a basis at the end.

If we had a $0'$ oracle, we could build what we call {\em the $\om$-least basis} for $\M$ as follows:
once we have defined $m_0,m_1,....,m_{s-1}$, we define the next element, $m_s$, be the $\omega$-least element of $\M$ which is not in the closure of $m_0,m_1,....,m_{s-1}$. (Recall that the domain of $\mc{M}$ is $\omega$.)
We will not be able to get $\tau(a_0), \tau(a_1),...$ to be this particular basis, but we can get close.
We say that a tuple $u_0,...,u_s$ in $\M$ has {\em $\om$-least span} if, for every $i \leq s$, $\cl(u_0,...,u_i) = \cl(m_0,...,m_i)$, or equivalently, if  $\cl(u_0,...,u_i)$ contains the $\om$-least element of $\M$ which is not in $\cl(u_0,...,u_{i-1})$. By the exchange principle and the fact that $m_0,m_1,\ldots$ are independent, $m_i \in \cl(u_0,\ldots,u_i)$ and $u_i \in \cl(u_0,\ldots,u_{i-1},m_i)$.
Note that if an infinite subset of $\M$ has $\om$-least span then it is a basis.

At each stage $t$ we can only approximate this property. At stage $t$, let $n_0[t]$ be the least element not in $\cl_t(\varnothing)$, $n_1[t]$ the least element not in $\cl_t(n_0[t])$, $n_2[t]$ the least element not in $\cl_t(n_0[t],n_1[t])$, and so on. We say that a tuple $u_0,...,u_s$ in $\M$ has {\em $\om$-least span at $t$} if  for every $i \leq s$, $\cl_t(u_0,...,u_i)$ contains $n_i[t]$ and $\cl_t(u_0,\ldots,u_{i-1},n_i[t])$ contains $u_i$.

\setcounter{contenumi}{\value{enumi}} \begin{enumerate} \setcounter{enumi}{\value{contenumi}}\renewcommand{\theenumi}{P\arabic{enumi}}
\item $\tau_s(a_0), \ldots, \tau_s(a_s)$ has {\em $\om$-least span at $t_s$}    \label{P6}
\end{enumerate}

Note that the $n_i[t]$ are computable and are increasing (in the lexicographic order) as $t$ increases, with limit $m_i$ (the elements of the $\omega$-least basis). Suppose that at stage $t$ we have that $n_0[t],...,n_i[t]$ are part of the $\om$-least basis, i.e., they are equal to $m_0,\ldots,m_i$. If $u_0,\ldots,u_i$ has $\omega$-least span at stage $t$, then the exchange principle implies that $u_0,\ldots,u_i$ are independent.

On the other hand, if $u_0,\ldots,u_i$ are independent and have $\omega$-least span at stage $t$, then since $u_0,\ldots,u_i \in \cl_t(n_0[t],\ldots,n_i[t])$, $n_0[t],\ldots,n_i[t]$ are independent and hence equal to $m_0,\ldots,m_i$. Then $u_0,\ldots,u_i$ have $\omega$-least span at stage $t$.

Therefore, we get that, for every $i\in\om$, there is $t$ large enough such that, once $t_s>t$, Condition (\ref{P6}) implies that $\tau_s(a_0),...,\tau_s{(a_i)}$ are independent in $\M$. As we have argued above, this then implies that the sequence $\tau_s$ converges pointwise to a bijection $\tau\colon \om \to\M$ getting that $\G\isom \M$. Condition (\ref{P6}) also implies that $\tau(a_0), \tau(a_1),...$ is a basis of $\M$ and hence that $a_0, a_1,...$ is a basis of $\G$.

\subsubsection*{Construction} 
At \emph{stage $0$},  define $\mc{G}_0 = \{a_0\}$ and let  $\tau_0(a_0) $ be the $\omega$-least element of $\mc{M}$ that is independent over~$\varnothing$. Without loss of generality, we assume $\tau_0(a_0) = 0$.
After \emph{stage $s$} is finished, we will have defined $\tau_{s}$, $a_0, \ldots, a_s \in \mc{G}_{t_s}$ and $t_s$ satisfying (\ref{P1})-(\ref{P6}). 
At \emph{stage $s+1$} define  $\tau_{s+1}$, $a_0, \ldots, a_{s+1} \in \mc{G}_{t_{s+1}}$ and $t_{s+1}$ to be the first ones we find satisfying (\ref{P1})-(\ref{P6}). 

The last step in the proof is to show that Condition G guarantees such objects exist. 

\begin{claim}
Given $\tau_{s}$, $a_0, \ldots, a_s$ and $t_s$ satisfying (\ref{P1})-(\ref{P6}), there exist $\tau_{s+1}$, $a_0, \ldots, a_{s+1}$ and $t_{s+1}$ also satisfying (\ref{P1})-(\ref{P6}).
\end{claim}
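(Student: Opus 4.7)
My plan is to identify the longest prefix of $u_0,\ldots,u_s$ (where $u_j:=\tau_s(a_j)$) that is truly independent in $\M$, hold those values fixed, and then use the safeness clause (\ref{P5}) at stage $s$ together with local indistinguishability to re-choose the remaining values so that the new basis is simultaneously actually independent, has true $\om$-least span, and is compatible with the atomic commitments already encoded in $\M_{t_s}$. Let $m_0,m_1,\ldots$ be the true $\om$-least basis of $\M$ and let $i^*\le s$ be the largest index for which $u_0,\ldots,u_{i^*}$ is independent in $\M$ (so $i^*\ge 0$, since $u_0\notin\cl(\varnothing)$ by the stage~$0$ choice and (\ref{P4}) forbids moving $a_0$ once it is correct). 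By (\ref{P6}) together with the observation closing the Least Span paragraph, this prefix actually has $\om$-least span, whence $\cl(u_0,\ldots,u_j)=\cl(m_0,\ldots,m_j)$ for all $j\le i^*$. I retain $\tau_{s+1}(a_j):=u_j$ for $j\le i^*$ and $\tau_{s+1}(b):=\tau_s(b)$ for every $b\in\dom(\tau_s)$ with $\tau_s(b)\in\cl_{t_s}(u_0,\ldots,u_{i^*})$; these images comprise exactly the tuple $\cbar$ used in the safeness formula $\psi_{i^*}$.

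Next, by (\ref{P5}) at stage $s$, $\psi_{i^*}(\cbar,x_{i^*+1},\ldots,x_s)\in\mc{I}_\M(\cbar)$, so some tuple $\tilde w_{i^*+1},\ldots,\tilde w_s$ independent over $\cbar$ realises it. Since $\cbar\subseteq\cl(m_0,\ldots,m_{i^*})$, the tuple $(m_{i^*+1},\ldots,m_s)$ is also independent over $\cbar$. Applying local indistinguishability to the existential formula $\psi_{i^*}(\cbar,\,\cdot\,)$ with $\ubar=(\tilde w_{i^*+1},\ldots,\tilde w_s)$ and $\vbar=(m_{i^*+1},\ldots,m_s)$ produces $(w_{i^*+1},\ldots,w_s)$ independent over $\cbar$ with $\M\models\psi_{i^*}(\cbar,w_{i^*+1},\ldots,w_s)$ and $w_j\in\cl(\cbar,m_{i^*+1},\ldots,m_j)$. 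By the remark after Definition~\ref{defn:loc}, $w_j$ and $m_j$ are interdependent over $\cbar,m_{i^*+1},\ldots,m_{j-1}$, so $\cl(u_0,\ldots,u_{i^*},w_{i^*+1},\ldots,w_j)=\cl(m_0,\ldots,m_j)$ for $i^*<j\le s$. I set $\tau_{s+1}(a_j):=w_j$ for $i^*<j\le s$ and $\tau_{s+1}(a_{s+1}):=m_{s+1}$; the remaining elements of $\dom(\tau_s)$ (those whose $\tau_s$-image lay in the $\vbar$-part of $\M_{t_s}$) are sent to the matching existential witnesses $\ybar$ certifying $\psi_{i^*}$, and the domain is enlarged by fresh indices so that $\{0,\ldots,s\}\subseteq\rng(\tau_{s+1})$ (possible because $\M=\bigcup_k\cl(m_0,\ldots,m_k)$).

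Now I pick $t_{s+1}$ large enough to witness simultaneously that the new basis is $t_{s+1}$-independent (automatic because it is actually independent), that $n_j[t_{s+1}]=m_j$ for $j\le s+1$ together with the closure conditions giving $\om$-least span at $t_{s+1}$ (yielding (\ref{P6})), that the finite range sits inside the $\cl_{t_{s+1}}$-closure of the new basis (yielding (\ref{P2})), that every $\Si_1$ fact about $\tau_s$-images true in $\M_{t_s}$ is re-witnessed at level $t_{s+1}$ (yielding (\ref{P3}), using the chosen $\ybar$), and that $u_0,\ldots,u_j$ is $t_{s+1}$-dependent for each $j>i^*$ (this must occur eventually, since each such prefix is actually dependent). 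Then (\ref{P1}), (\ref{P2}), (\ref{P3}), (\ref{P6}) are immediate. For (\ref{P4}), the only altered images are those with $\tau_s(b)\notin\cl_{t_s}(u_0,\ldots,u_{i^*})$, so the smallest $i$ with $\tau_s(b)\in\cl_{t_s}(u_0,\ldots,u_i)$ exceeds $i^*$, and the corresponding prefix is now $t_{s+1}$-dependent, as required. For (\ref{P5}) at stage $s+1$, the new basis $\tau_{s+1}(a_0),\ldots,\tau_{s+1}(a_{s+1})$ is actually independent in $\M$ by construction, so the closing remark of the Safeness paragraph delivers the required $\psi'_i\in\mc{I}_\M(\cbar'_i)$ for all $i<s+1$.

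The main obstacle is the middle step: producing a tuple that at once (a) satisfies the atomic commitment $\psi_{i^*}$ that has already been made in $\M_{t_s}$, and (b) lies in the closure of the true $\om$-least basis so as to secure $\om$-least span. Neither half of Condition~G suffices alone — the c.e.\ independence diagram supplies an independent realisation of $\psi_{i^*}$ with no span control, and local indistinguishability can relocate witnesses into any designated independent tuple but needs a satisfying tuple to start from — and it is precisely the ability to compose them that Condition~G was engineered to provide.
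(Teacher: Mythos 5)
Your proposal is correct and follows essentially the same route as the paper's proof: identify the longest truly independent prefix, freeze its $\cl_{t_s}$-closure $\cbar$, invoke (\ref{P5}) to place $\psi_{i^*}$ in $\mc{I}_{\M}(\cbar)$, and use local indistinguishability to relocate the witnesses into the span of an independent tuple with $\om$-least span (you take $m_{i^*+1},\ldots,m_s$ explicitly, the paper takes an arbitrary such $\vbar$), then choose $t_{s+1}$ large enough to certify (\ref{P1})--(\ref{P6}). The only differences are cosmetic, e.g.\ your slightly more explicit verification of (\ref{P4}).
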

\begin{proof}
To simplify the notation let $u_i=\tau_s(a_i)$ for each $i$.

Suppose now that $u_0,...,u_s$ are not independent.
Let $i$ be as large as possible with $\tau_s(a_0),...,\tau_s(a_i)$ are independent.
We noted above that this implies that they also have $\om$-least span.
Let $\cbar$ be the tuple of elements in the range of $\tau_s$ which belong to $\cl_{t_s}(u_0,...,u_i)$.
We will keep $\tau_s$ fixed on (the pre-image of) $\cbar$ so that condition (\ref{P4}) is satisfied, and change the rest.
Let $\psi_i(\cbar,x_{i+1},...,x_s)$ be as in the subsection on Safeness.
Since $\psi_i$ is in the independence diagram of $\cbar$ we know it is true of some independent tuple.
We are now ready to apply Condition G.
Let $v_{i+1},...,{v_s}$ be independent over $\cbar$ such that $u_0,...,u_i,v_{i+1},..,v_s$ has $\om$-least span.
By Condition $G$ there exist $w_{i+1},...,{w_s}$ independent over $\cbar$ such that $\M\models \psi_i(\cbar, w_{i+1},...,w_s)$ and such that for every $j\leq s$, $w_j\in \cl(\cbar,v_{i+1},...,v_{j})$ (and, by the exchange principle, $\cl(\cbar,v_{i+1},\ldots,v_j) = \cl(\cbar,w_{i+1},\ldots,w_j)$ for each $i < j \leq s$).
Thus $u_0,\ldots,u_i,w_{i+1},\ldots,w_s$ has $\om$-least span as well.

Re-define $\tau_{s+1}(a_j)=w_j$ for $j>i$.
For all the other elements in the domain of $\tau_s$ use the fact that $\M\models \psi_i(\cbar, w_{i+1},...,w_s)$ to define $\tau_{s+1}$ in a way their existential diagrams within $\M_{t_s}$ remain unchanged, so that (\ref{P3})  holds.
Let $t_{s+1}$ be larger than all of the elements in the image of $\tau_{s+1}$.

Since $u_0,...,u_i,w_{i+1},...,w_{s}$ is independent we get conditions (\ref{P1}) and (\ref{P5}).
By increasing $t_{s+1}$, we can have $n_j[t_{s+1}] = m_j$ for $0 \leq j \leq s$.
For every $i < j \leq s$, $m_j \in \cl(\cbar,w_{i+1},...,w_{j})$ and $w_j \in \cl(\cbar,w_{i+1},...,w_{j-1},m_j)$.
Increase $t_{s+1}$ further to witness these dependencies.
Thus $u_0,...,u_i,w_{i+1},..,w_s$ has $\om$-least span at stage $t_s$, and we have condition (\ref{P6}).

If $u_0,...u_s$ were independent from the beginning, leave everything untouched.
In either case we have $\tau_{s+1}, a_0,...,a_s, t_{s+1}$ satisfying all the required condition maybe except (\ref{P2}).
Define $\tau_{s+1}(a_{s+1})$ to be the least element not in $\cl(u_0,...,u_s)$.
Increase the value of $t_{s+1}$ if necessary so that every element below $\tau_{s+1}(a_{s+1})$ belongs to $\cl_{t_{s+1}}(u_0,...,u_s)$.
This way we get that $u_0,...,u_s,\tau_{s+1}(a_{s+1})$ is $\om$-least at $t_{s+1}$.
If necessary, extend $\tau_{s+1}$ so that $s+1$ belongs to its range.
Since $u_0,...,u_s$ had $\om$-least span, $s+1$ must be in the $t_{s+1}$-span of $u_0,...,u_{s+1}$.
It is not hard to check now that (\ref{P1})-(\ref{P6}) are all satisfied.
\end{proof}

This finishes the proof of Proposition \ref{pr:G}.
\end{proof}

\begin{rem} Note that every  computable copy of $\mc{M}$ with a computable basis satisfies Condition G trivially. Thus, having some computable copy with Condition G is necessary and sufficient for $\mc{M}$ to have a computable copy with a computable basis (though it is possibly not necessary for $\mc{M}$ itself to have Condition G).
\end{rem}

\subsection{A computable copy with no computable basis} Recall that \emph{dependent elements are dense in $\mc{M}$} if, whenever $\psi$ is quantifier-free and $\mc{M} \models \exists \bar{y} \psi(\bar{c}, \bar{y}, a)$ for a non-empty tuple $\bar{c}$ and $a \in \mc{M}$, there is $b \in \cl(\bar{c})$ and $\mc{M} \models \exists \bar{y} \psi(\bar{c}, \bar{y}, b)$. 
We now prove:
\begin{prop}\label{pr:B}\nonitalics Let $\mc{M}$ be a computable structure, and let $\cl$ be a r.i.c.e.~pregeometry upon $\mc{M}$. If the $\cl$-dimension of $\mc{M}$  is infinite and 
 dependent elements   are dense in $\mc{M}$ (this is Condition B),  then $\mc{M}$ has a computable presentation $\mc{B} \cong_{\Delta^0_2} \mc{M}$ that has no computable basis. 
\end{prop}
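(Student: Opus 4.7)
The plan is to build $\mc{B}$ together with a $\Delta^0_2$ isomorphism $\tau:\mc{B}\to\mc{M}$ by a finite-injury priority construction. At stage $s$, $\tau_s$ will be a finite injective partial map from $\omega$ into $\mc{M}$; the atomic diagram of $\mc{B}$ is enumerated by pulling back the atomic diagram of $\mc{M}$ on $\operatorname{rng}(\tau_s)$ along $\tau_s$. The global invariant is that every atomic fact ever committed to $\mc{B}$ remains true under all later $\tau_t$, which together with standard back-and-forth bookkeeping makes $\mc{B}$ a well-defined computable structure and $\tau = \lim_s \tau_s$ an isomorphism onto $\mc{M}$.

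Order the requirements $R_e : W_e \text{ is not a basis of } \mc{B}$ by priority $R_0 \succ R_1 \succ \cdots$. The strategy for $R_e$ will force $W_e$ to contain a dependent element of $\mc{B}$. It waits until $W_{e,s}$ enumerates a candidate follower $b_e$ together with auxiliaries $c_1,\ldots,c_k \in W_{e,s}$ such that (i) $\tau_s(b_e)$ is $\mc{M}$-independent over $\tau_s(c_1),\ldots,\tau_s(c_k)$ and over the higher-priority followers $b_0,\ldots,b_{e-1}$, (ii) every $b_j$ with $j<e$ lies in $\cl_{\mc{B}}(c_1,\ldots,c_k)$ as detected by the c.e.\ enumeration of $\cl_{\mc{B}}$, and (iii) the tuple $\bar{c} := (\tau_s(b_0),\ldots,\tau_s(b_{e-1}),\tau_s(c_1),\ldots,\tau_s(c_k))$ contains at least $m$ independent elements of $\mc{M}$, with $m$ as in Condition B. When this configuration appears, $R_e$ acts. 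Let $\bar{v}$ list $\operatorname{rng}(\tau_s)\setminus(\bar{c}\cup\{\tau_s(b_e)\})$, and let $\psi(\bar{c},\bar{y},x)$ be the committed atomic diagram of $(\bar{c},\bar{v},\tau_s(b_e))$ in $\mc{M}$, written with $\bar{y}$ in place of $\bar{v}$ and $x$ in place of $\tau_s(b_e)$. Since $\mc{M} \models \exists \bar{y}\,\psi(\bar{c},\bar{y},\tau_s(b_e))$, Condition B yields $a'\in \cl(\bar{c})$ and witnesses $\bar{v}'$ with $\mc{M} \models \psi(\bar{c},\bar{v}',a')$. Define $\tau_{s+1}$ to agree with $\tau_s$ on the preimages of $\bar{c}$, to send $b_e \mapsto a'$, and to send each preimage of a component of $\bar{v}$ to the matching component of $\bar{v}'$. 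The choice of $\psi$ preserves every committed atomic fact, and after the action $b_e \in \cl_{\mc{B}}(\bar{c}) = \cl_{\mc{B}}(c_1,\ldots,c_k) \subseteq \cl_{\mc{B}}(W_e\setminus\{b_e\})$ by (ii), so $W_e$ is dependent.

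Finite injury is standard. When $R_e$ acts, the reassignment of preimages of $\bar{v}$ may displace some lower-priority followers, which then restart; the higher-priority followers and the committed $c_j$ are never moved by $R_e$. Each $R_e$ is initialized only by the action of some higher-priority $R_{e'}$ with $e'<e$, and it acts at most once per initialization, so every requirement acts only finitely often and $\tau_s(b)$ stabilizes for every $b$, making $\tau$ a $\Delta^0_2$ isomorphism. Because the atomic diagram is preserved throughout, the r.i.c.e.\ pregeometry $\cl_{\mc{B}}$ on $\mc{B}$ is also preserved, so previously successful requirements remain satisfied even when subsequent actions move their ambient $\tau$-values.

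The main obstacle is verifying that the waiting step in $R_e$'s strategy terminates whenever $W_e$ really is a basis, so that $R_e$ is met nonvacuously in that case. Item (i) is eventually achieved because $\mc{M}$ has infinite dimension and $W_e$ must span $\mc{B}$, so $W_e$ contains $\mc{M}$-independent elements over any fixed finite set. Item (ii) is eventually achieved by the finite character of $\cl_{\mc{B}}$ together with spanning: each $b_j$ ($j<e$) lies in $\cl_{\mc{B}}(F)$ for some finite $F \subseteq W_e$, which is detected in finite time through the c.e.\ enumeration. Item (iii) follows from (i) and the infinite dimension hypothesis. If any of these clauses fails to be achieved, then $W_e$ simply is not a basis and $R_e$ is satisfied trivially. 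Condition B is the crucial algebraic input that makes the $\tau$-adjustment legal: it supplies a dependent replacement $a'$ for $\tau_s(b_e)$ together with new witnesses $\bar{v}'$, compatible with the existential formula recording every atomic commitment made to $\mc{B}$ so far.
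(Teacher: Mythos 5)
Your overall architecture (finite injury, pull back the atomic diagram along a shrinking-then-stabilizing map $\tau_s$, use Condition B to replace the image of a follower by a dependent element while preserving all committed atomic facts) is the right one, and your requirement is a legitimate variant: you diagonalize directly against each c.e.\ set being a basis, whereas the paper diagonalizes against $\varphi_e$ being a \emph{dependence algorithm} and then invokes Proposition~\ref{comp-pregeometry} to convert ``no computable closure'' into ``no computable basis.'' But there are two genuine gaps. The first, and the one I do not see how to repair inside your setup, is the convergence of $\tau$. Each action of $R_e$ remaps \emph{every} element of $\dom(\tau_s)$ outside the finite protected tuple $\bar{c}$, and infinitely many requirements may act (each at least once). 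So a fixed $b\in\mc{B}$ that never happens to lie in any $R_e$'s protected tuple can be remapped infinitely often; ``every requirement acts only finitely often'' does not give pointwise stabilization of $\tau_s(b)$, and without it you get neither $\mc{B}\cong\mc{M}$ nor $\Delta^0_2$-ness. The paper's construction avoids this by first reducing to the case where $\mc{M}$ itself has a computable basis $a_0,a_1,\ldots$ (otherwise $\mc{B}=\mc{M}$ already witnesses the proposition); the preimages of that basis serve as a stable spine, every element of $\mc{B}$ eventually sits in the closure of an initial segment of the spine, and each action protects that whole closure, so each element is moved only by the finitely many requirements of higher priority than ``its'' segment. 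Your protected sets $\{c_1,\ldots,c_k,b_0,\ldots,b_{e-1}\}$ are dictated by the enumerations of the $W_e$ and have no reason to exhaust $\mc{B}$ even after taking closures.

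The second gap is that your trigger is not effective: clauses (i) and (iii) ask the strategy to \emph{wait until} certain elements are independent, but independence in a r.i.c.e.\ pregeometry is $\Pi^0_1$, so this event is never confirmable at a finite stage. Clause (i) is harmless to drop (if $\tau_s(b_e)$ was secretly already in $\cl(\bar{c})$ then $W_e$ is already dependent and the action does no damage), but clause (iii) is needed to legitimize the application of Condition~B in the form actually verified in the applications ($\bar{c}$ containing $m$ independent elements), and you cannot certify it. Both problems evaporate under the paper's opening reduction: once $\mc{M}$ is assumed to have a computable basis, Proposition~\ref{comp-pregeometry} makes the closure relation on $\mc{M}$ \emph{computable}, so independence checks become decidable and the spine is available. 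I recommend you add that reduction as your first line; with it, your direct ``$W_e$ is not a basis'' requirements can be made to work, though the paper's indirect route through dependence algorithms remains the shorter bookkeeping.
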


\begin{proof} Because the proof is a standard finite injury construction we will only give a sketch.
 Suppose $\mc{M}$ has a computable basis $a_0, a_1, \ldots$ (otherwise there is nothing to prove). We construct $\mc{B}$ by stages.
We meet the requirements:
$$R_e: \varphi_e\mbox{ is not a dependence algorithm for } \mc{B},$$
where $\varphi_0, \varphi_1, \ldots, $is the standard effective listing of all partial recursive functions. 
Initially we will attempt to copy $\mc{M}$ so that the images of $a_i$ are special elements $b_i$ of $\mc{B}$. 

The strategy for $R_e$ will have a witness $b_e$; it waits until $\varphi_e$ declares $b_e$ independent of $b_0, \ldots, b_{e-1}$. If it ever does, 
we make $b_{e}$ dependent on $b_0, \ldots, b_{e-1}$ using that dependent elements are dense in $\mc{M}$. We also rearrange the map from $\mc{M}$ to $\mc{B}$; to do that we introduce a new image for $a_e$ in $\mc{B}$.

Now this does not immediately prevent $\mc{B}$ from having a computable basis, but it does ensure that the closure operation on $\mc{B}$ is not computable. By Proposition \ref{comp-pregeometry}, this is actually equivalent. \end{proof}

Theorem~\ref{main} follows from Propositions~\ref{pr:G} and \ref{pr:B}

\section{Applications}

It is not difficult to see that computable vector spaces and algebraically closed fields both have the  Mal{\textquotesingle}cev property and indeed satisfy Conditions B and G (essentially  Mal{\textquotesingle}cev \cite{Malcev62}, Metakide-Nerode \cite{MetakidesNerode79}, and Goncharov \cite{Goncharov82}). Since we do not really need the metatheorem for these trivial cases, we skip these examples and leave the (elementary) verification to the reader. We concentrate on non-elementary applications.

\subsection{Differentially Closed Fields}
\label{DCF}Recall that a differential field is a field with a differential operator $\delta$. 
 In this section we look at existentially closed differential fields of characteristic zero. The first-order theory $\dcf$ of differentially closed fields is complete, axiomatizable, decidable, and has quantifier elimination. It is the model companion of differential fields. A more detailed exposition can be found \cite{Marker06}. 

In differential fields, there are analogs of polynomial rings and algebraic independence. The ring $K\{X_1,\ldots,X_n\}$ of $\delta$-polynomials over $K$ is the polynomial ring
\[ K[X_1,\ldots,X_n,\delta(X_1),\ldots,\delta(X_n),\delta^2(X_1),\ldots] \]
where each of these are indeterminates. We extend the derivation to this ring in the natural way, by setting $\delta(\delta^n(X_i)) = \delta^{n+1}(X_i)$.

\begin{defn}
We say that a set $A$ is \textit{$\delta$-dependent} over $B$ if there are $a_1,\ldots,a_n$ in $A$ and a nonzero $\delta$-polynomial $f \in \mathbb{Q}\langle B \rangle\{X_1,\ldots,X_n\}$ which they satisfy. Otherwise, we say that $A$ is \textit{$\delta$-independent} over $B$. A \textit{$\delta$-transcendence base} is a maximal $\delta$-independent subset of $K$.
\end{defn}


We get a finitary closure operator by defining $\cl(B)$ to be the set of elements which are $\delta$-dependent\footnote{Warning: In a differentially closed field, $\delta$-dependence is always different from model-theoretic algebraic dependence as the equation $\delta(x) = 0$ defines an infinite set.} over $B$. It is not hard to verify that differential closure induces a r.i.c.e.\ pregeometry. We prove:

\medskip

\noindent \textbf{Theorem~\ref{th:diff}.} \textit{The class $DCF_0$ of computable differential closed fields  of characteristic zero has the  Mal{\textquotesingle}cev property with respect to $\delta$-independence.}

\medskip

\begin{proof}
By Theorem \ref{main}, it suffices to check that any $\mc{M} \models DCF_0$ of infinite dimension satisfies Conditions G and B.

Differentially closed fields have unique independent types. That is, for any fixed tuple $\bar{c}$, and $\bar{a}$, $\bar{b}$ both independent over $\bar{c}$, $\tp(\bar{a}/\bar{c}) = \tp(\bar{b}/\bar{c})$. Let $p(\bar{x})$ be this type with parameters $\bar{c}$. This type $p(\bar{x})$ is generated over the theory $DCF_0$ by all of the quantifier-free formulas true about $\bar{c}$ together with the formulas which say that $\bar{x}$ satisfies no non-trivial differential polynomial with coefficients from $\mathbb{Q}\{\bar{c}\}$ \cite[see Section 3 of][]{Marker02}.
Since $\mc{M}$ is computable, we can list all valid quantifier-free formulas true of $\bar{c}$, and thus $p(\bar{x})$ is computable uniformly in $\bar{c}$.

Now we will check Condition G. Given $\bar{c}$, we can compute the type $p(\bar{x})$ of an independent tuple of some arity over $\bar{c}$. Then we can decide whether or not any existential formula is in this type.
Similarly, independent tuples are locally indistinguishable, because if $\bar{c}$ is a tuple, and $\bar{u}$ and $\bar{v}$ are both independent over $\bar{c}$, then $\tp(\bar{u} / \bar{c}) = \tp(\bar{v} / \bar{c})$.

We check Condition B. We first observe that  independent types are non-principal in $DCF_0$. 
Indeed, it is well-known that if $K$ is a differential field and $L$ a differentially closed field containing $K$, then the differential closure of $K$ in $L$ is a differentially closed field which omits the type of a $\delta$-transcendental element over $K$.
Now suppose $\mc{M} \models \exists \bar{y} \psi(\bar{c}, \bar{y}, a)$ for a non-empty tuple $\bar{c}$ and $a \in \mc{M}$. If  $a\in \cl(\bar{c})$ then we are done. Otherwise, consider an embedding of the prime model $\mc{K}$ over $\bar{c}$ in $\mc{M}$ (since $DCF_0$ is $\omega$-stable, it has prime models over any set). Since $\mc{K} \preceq \mc{M}$,    $\mc{K} \models \exists b \exists \bar{y} \psi(\bar{c}, \bar{y}, b)$. Any such $b$ will be differentially algebraic over $\bar{c}$.
\end{proof}


\subsection{Difference Closed Fields}\label{sec:ACFA}
 A \textit{difference field} is a field together with a distinguished automorphism $\sigma$. Difference fields have a model companion $\acfa$. The theory $\acfa$ of \textit{difference closed fields} is first-order axiomatizable.
 The theories $\acfa$, $\acfa_0$, and $\acfa_p$ (the subscript denoting the characteristic) are decidable, see \cite[(1.4) of][]{ChatzidakisHrushovski99}. Note that the field-theoretic and model-theoretic algebraic closures coincide, see \cite[(1.7) of][]{ChatzidakisHrushovski99}.
 For a more detailed exposition of the model theory of difference fields, see \cite{ChatzidakisHrushovski99}.

  The difference polynomial ring $K\langle X_1,\ldots,X_n \rangle$ is the polynomial ring
\[ K[X_1,\ldots,X_n,\sigma(X_1),\ldots,\sigma(X_n),\sigma^2(X_1),\ldots] \]
with the natural extension of $\sigma$.

\begin{defn}
Let $A$ be a subset of $K$, and let $E$ be the difference field generated by $A$. We say that $a_1,\ldots,a_n$ are \textit{transformally dependent} over $A$ if there is a nontrivial difference polynomial $f \in E\langle X \rangle$ which they satisfy. Otherwise, we say that $a_1,\ldots,a_n$ are \textit{transformally independent} over $A$.
\end{defn}

We get a finitary closure operator by defining $\cl(B)$ to be the set of elements which are transformally dependent over $B$. This notion of dependence induces a r.i.c.e.\ pregeometry. We prove:

\medskip

\noindent \textbf{Theorem~\ref{th:acfa}.} \textit{The class $\acfa$ of computable difference closed fields has the  Mal{\textquotesingle}cev property with respect to transformal independence.}

\medskip

\begin{proof} 
By Theorem \ref{main}, it suffices to check that any $\mc{M} \models \acfa$ of infinite dimension satisfies Conditions G and B.

Let $\mc{M}$ be a computable model of $\acfa$. The complete theory of $\mc{M}$ is given by the axioms of $\acfa$, the characteristic, and the action of the automorphism on the algebraic closure of the prime field. Every formula is equivalent, modulo $\acfa$, to an existential formula \cite[see (1.6) of][]{ChatzidakisHrushovski99}. So the elementary diagram of $\mc{M}$ is decidable, since for any formula $\varphi$, we will eventually find that either $\mc{M} \models \varphi$ or $\mc{M} \models \neg \varphi$.

We check Condition G.
Difference closed fields have unique independent types over \emph{algebraically closed} subfields (in fact, as we will see, over any parameters, but the types over arbitrary parameters may be more complicated). Thus, for any algebraically closed $E \subseteq \mc{M}$ and $\bar{a}$, $\bar{b} \in \mc{M}$ both independent over $E$, $$\tp(\bar{a}/E) = \tp(\bar{b}/E).$$ Moreover, this type $p(\bar{x})$ is generated by all of the quantifier-free formulas true about $E$ together with the formulas which say that $\bar{x}$ satisfies no non-trivial difference polynomial with coefficients from $\mathbb{Q}\{E\}$ \cite[see Proposition 2.10 of][]{ChatzidakisHrushovski99}.
Note that if $\bar{c}$ is any tuple, then there is a unique type of an independent element over $\acl(\bar{c})$, and hence over $\bar{c}$.
Thus, independent tuples are locally indistinguishable.
 Given $\bar{c}$, we can enumerate $\acl(\bar{c})$ and compute the type $p(\bar{x})$ of an independent tuple over $\acl(\bar{c})$. We can restrict ourselves  to formulas about $\bar{c}$ and compute the type of an independent tuple over $\bar{c}$. Then we can decide whether or not any existential formula is in this type.

We check Condition B. As in the case of $DCF_0$,  independent types are non-principal in $ACFA$. This can be derived from the fact that if $E$ is a difference field contained in a difference closed field $K$, then the set $K_0$ of elements transformally algebraic over $E$ is a difference closed field with $K_0 \preceq K$ \cite[see, e.g., the remark after Theorem 1.1 of][]{ChatzidakisHrushovski99}. If $\bar{e}$ is a tuple in some difference closed field $K$, and $E$ is the difference field generated by $\bar{e}$, then the corresponding $K_0 \preceq K$ omits the type of an independent tuple over $\bar{e}$. The rest can be done just as in the case of $DCF_0$. \end{proof}

\subsection{Real Closed Fields}
\label{RCF}
We assume that the reader is familiar with the definition of an ordered field. \emph{Real closed fields} are existentially closed ordered fields. Equivalently, a  field $F$ is real closed if every positive element has a square root in $F$ and every polynomial of odd degree with coefficients in $F$ has a root in $F$.
These give axioms for the theory $\rcf$ of real closed fields. Tarski~\cite{Tarski48} showed that the theory $\rcf$ is complete, decidable, and has quantifier elimination. Using quantifier elimination, it is easy to see that the definable sets in a single variable consist of finitely many points (the solutions to certain polynomial equations) plus finitely many (possibly unbounded) intervals. The standard generalization of this phenomenon is \emph{o-minimality}, see e.g.~\cite{VanDenDries98}.  






Let $F$ be a real closed field. The model-theoretic algebraic closure agrees with the algebraic closure as a pure field, as any finite set of points which is definable can be defined without the ordering. In general, Pillay and Steinhorn \cite{PillaySteinhorn86} remarked that for o-minimal structures, the model-theoretic algebraic closure agrees with the model-theoretic definable closure, and that this is always a pregeometry.

Unlike algebraically closed fields, transcendental types are not unique. For example, in $\mathbb{R}$, $\pi$ and $e$ have different types over $\mathbb{Q}$ and yet are both transcendental. Our proof that real closed fields have the  Mal{\textquotesingle}cev property will use \emph{cell decomposition}.

\begin{defn}
The collections of \textit{cells} is defined recursively by:
\begin{enumerate}
	\item If $X$ is a single point in $F^n$, then $X$ is a 0-dimensional cell.
	\item Every open interval $(a,b)$ in $F$ is a 1-dimensional cell (with $a \in F \cup \{-\infty\}$ and $b \in F \cup \{\infty\})$.
	\item If $X \subseteq F^n$ is an $m$-dimensional cell, and $f : X \to F$ is a continuous definable function, then
		\[ Y = \{(\bar{x},f(\bar{x})) : \bar{x} \in X\} \]
		is an $m$-dimensional cell.
	\item If $X \subseteq F^n$ is an $m$-dimensional cell, and $f$ and $g$ are either both continuous functions $X \to F$, or $f$ is possibly the constant function $-\infty$, or $g$ is possibly the constant function $\infty$, and $f(\bar{x}) < g(\bar{x})$ for all $\bar{x} \in X$, then
	\[ Y = \{(\bar{x},y) : \bar{x} \in X  \text{ and } f(\bar{x}) < y < g(\bar{x}) \} \]
	is an $m+1$-dimensional cell.
\end{enumerate}
\end{defn}

Every definable set can be built up from cells using the following theorem:

\begin{thm}[Cell Decomposition, \cite{KnightPillay86}]\nonitalics
Given a set $X$ definable over $\bar{a}$, we can write $X$ as a finite union of disjoint cells. The functions and endpoints of the intervals are all definable over $\bar{a}$.
\end{thm}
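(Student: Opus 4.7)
The theorem is the classical cell decomposition for o-minimal expansions of real closed fields, due to Knight and Pillay~\cite{KnightPillay86}. The plan is to run an induction on the ambient dimension $n$, proved in tandem with two companion statements: uniform finiteness (UF$_n$: for every definable family $\{X_{\bar c}\}_{\bar c \in F^n}$ of subsets of $F$ there is a single integer $N$ bounding the number of points plus maximal open intervals in every fibre, uniform in $\bar c$) and the monotonicity theorem (MT$_n$: every definable $f\colon (a,b) \to F$ is, off a finite subset of $(a,b)$, a finite union of continuous strictly monotone or constant pieces). The parameter condition (``definable over $\bar a$'') comes for free, because every piece of the construction below is parametric in the original defining parameters of $X$.

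\textbf{Base case $n=1$.} CD$_1$ is immediate from o-minimality of $\rcf$, which is a direct consequence of Tarski's quantifier elimination: a definable subset of $F$ in one variable is a Boolean combination of polynomial equalities and inequalities, hence a finite union of points and open intervals whose endpoints are algebraic over the parameters. MT$_1$ is proved by applying o-minimality to the definable sets $\{x \in (a,b) : f$ is locally increasing at $x\}$ and its variants, each of which is a finite union of points and intervals; continuity on each piece then follows using the graph. UF$_1$ follows by applying o-minimality to the two-variable formula defining the family together with the compactness arguments standard in this setting.

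\textbf{Inductive step.} Assume CD$_n$, MT$_n$, UF$_n$ all hold. Given definable $X \subseteq F^{n+1}$, consider the projection $\pi\colon X \to F^n$. Applying UF$_n$ to the fibre family $\{X_{\bar x}\}_{\bar x \in F^n}$ yields a uniform bound $N$ on the number of components of each fibre. This lets us define finitely many parametric boundary functions $f_1(\bar x) < \cdots < f_k(\bar x)$ recording the endpoints of the open intervals and isolated points of each fibre, with $k$ ranging over finitely many cases as $\bar x$ varies. Apply CD$_n$ to decompose $F^n$ into cells on each of which the combinatorial type of the fibre is constant, and then use MT$_n$ together with its multivariable continuity consequence to refine those cells further so that each $f_i$ becomes continuous on each cell. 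The cells of $X$ are then the graphs of the $f_i$ over each base cell together with the open strips between consecutive $f_i$'s. UF$_{n+1}$ and MT$_{n+1}$ are derived in parallel by similar fibration arguments, keeping the induction self-sustaining.

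The main obstacle is the simultaneous bookkeeping of continuity, monotonicity, and uniform finiteness: one cannot simply apply o-minimality fibre by fibre, since the boundary functions $f_i$ need not be continuous on the whole base and the number of components could a priori vary discontinuously in $\bar x$. The whole reason for running CD, UF, and MT together in one induction is to break this circularity, and this is exactly the substance of the Knight--Pillay argument. The remaining details (glueing, handling of $\pm\infty$ endpoints, and tracking parameters) are routine once the three statements have been set up so as to feed each other.
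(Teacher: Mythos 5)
The paper offers no proof of this statement: it is quoted verbatim as a classical theorem and attributed to Knight--Pillay (and Pillay--Steinhorn), so there is nothing in the paper to compare your argument against line by line. Your outline is the standard proof of o-minimal cell decomposition --- the simultaneous induction on ambient dimension carrying cell decomposition, uniform finiteness, and monotonicity/piecewise continuity --- which is exactly the argument of the cited source and of van den Dries's book. As a plan it is correct: the base case via Tarski's quantifier elimination is right, and the inductive step (uniformly bounding the number of fibre components, extracting boundary functions, decomposing the base so the fibre's combinatorial type is constant, refining so the boundary functions are continuous, then taking graphs and open strips) has the right shape, including your observation that definability over $\bar{a}$ is inherited because every object produced is parametric in the original parameters --- which is the only feature the paper actually uses downstream, in the claim that cell decompositions can be effectively found.

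Two caveats. First, the steps you call routine are where the substance lies: uniform finiteness in the general o-minimal setting does not ``follow by compactness'' --- it is the hard theorem of Knight--Pillay--Steinhorn --- and the refinement making each $f_i$ continuous on each base cell is the $n$-variable piecewise-continuity theorem, which needs its own inductive argument and is not a formal consequence of MT$_n$ as you state it. Second, since this paper only ever needs the result for \rcf, you could shortcut the hardest ingredient: for a family defined by a Boolean combination of polynomial (in)equalities $p_i(\bar{c},x) \gtrless 0$, the number of points and intervals in each fibre is bounded by the degrees of the $p_i$ in $x$, uniformly in $\bar{c}$, so UF is immediate from quantifier elimination rather than from the general o-minimality machinery. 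With that substitution your sketch becomes an honest (if still compressed) proof for the case at hand.
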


We note that a cell decomposition is uniquely described by its rank, end points, and the definable functions used in its definition.

\medskip

\noindent \textbf{Theorem~\ref{th:rcf}.} \textit{The class of computable real closed fields has the  Mal{\textquotesingle}cev property with respect to algebraic independence.}
\medskip

 \begin{proof} We first observe that cell decompositions can be \emph{computed}.

\begin{claim}\label{kuku1}\nonitalics Let  $\mc{M}$ be a computable real closed field. There exists a uniform procedure that, given a first-order $\phi$ defining $X_\phi \subseteq \mc{M}^m$ with parameters $\bar{a} \in \mc{M}$, outputs the cell decomposition of $X_{\phi}$.
\end{claim}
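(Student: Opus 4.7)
The plan is to give an effective version of the Knight--Pillay proof of cell decomposition, leaning on two facts: $\rcf$ admits an effective quantifier elimination procedure (Tarski) and $\mc{M}$, being a computable real closed field, has a decidable elementary diagram --- indeed, after quantifier elimination, the truth of an atomic formula with parameters from $\mc{M}$ reduces to comparing finitely many polynomial values, and these can be computed and compared inside the computable structure $\mc{M}$. Given $\phi(\bar{x}, \bar{a})$, the first step is to run the quantifier elimination algorithm on $\phi$ to obtain an equivalent quantifier-free formula $\psi(\bar{x}, \bar{a})$, which we may take to be a Boolean combination of atomic formulas of the form $p_i(\bar{x}, \bar{a}) = 0$ and $p_i(\bar{x}, \bar{a}) > 0$ for finitely many integer polynomials $p_i$.

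The construction then proceeds by induction on $m$. For $m = 1$, I would use Sturm's algorithm, applied symbolically over $\mathbb{Q}(\bar{a})$ (with coefficient sign comparisons resolved by the decidable elementary diagram of $\mc{M}$), to list the real roots of the polynomials $p_i(x, \bar{a})$ in strictly increasing order; each root is isolated by a formula with parameters from $\bar{a}$, so it can be located as a concrete element of $\mc{M}$ by search. The cells are then the roots themselves, as $0$-cells, and the open intervals between consecutive roots together with the two unbounded intervals, as $1$-cells; on each such cell the truth value of $\psi$ is constant and computable. For the inductive step, I would regard each $p_i(\bar{x}, y)$ as a polynomial in $y$ with coefficients in $\mc{M}[\bar{x}]$ and compute a finite list of auxiliary polynomials in $\bar{x}$ (leading coefficients, discriminants, pairwise resultants, and principal subresultant coefficients) whose sign pattern controls, on each region of constant sign, the number, multiplicity, and relative ordering of the real roots of each $p_i$. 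Feeding this sign pattern into the inductive hypothesis produces a cell decomposition of $\mc{M}^m$; over each $m$-cell $C$ the real roots of each $p_i$ become continuous definable functions $f_{i,j}(\bar{x})$ whose defining formulas can be read off from the subresultant computation. The $(m+1)$-cells above $C$ are then the graphs of the $f_{i,j}$'s and the open strips between consecutive ones (with $\pm\infty$ as possible bounds).

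The main obstacle is not mathematical novelty but the bookkeeping required to make every subroutine uniformly effective: one has to confirm that Tarski's quantifier elimination, Sturm's root counting, and the subresultant computation are all uniform in the input polynomials, and that each output cell --- a dimension together with defining formulas for its bounding functions --- can be returned as a finite syntactic object. Continuity of the $f_{i,j}$ on each cell is part of the standard $o$-minimal cell decomposition argument (one refines by monotonicity cells, again an effective step once sign patterns of derivatives can be computed). Every individual sign check or root location is a first-order question over $\mc{M}$ with parameters, hence decidable by the preliminary remark, and since all of these classical real-algebraic ingredients are effectively implementable, the recursion terminates with a finite list of cells describing $X_\phi$.
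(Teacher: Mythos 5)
Your proposal is correct in outline, but it takes a genuinely different route from the paper. The paper's proof is a two-line ``search and verify'' argument: the Knight--Pillay theorem guarantees that \emph{some} cell decomposition of $X_\phi$ exists with all endpoints and functions definable over $\bar{a}$; the assertion ``this finite list of candidate defining formulas describes a cell decomposition of $X_\phi$'' is itself a first-order statement about $\bar{a}$; and since $\rcf$ has effective quantifier elimination and $\mc{M}$ is computable, the complete type of $\bar{a}$ is uniformly decidable, so one simply enumerates all candidate formulas and checks each until the guaranteed one appears. You instead build the decomposition directly by an effective cylindrical algebraic decomposition: quantifier elimination followed by induction on dimension using Sturm sequences, discriminants, resultants, and principal subresultant coefficients, with every sign test resolved inside the computable structure. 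Both work. The paper's argument buys brevity and robustness --- it needs nothing about real algebra beyond the existence theorem plus decidability of verification, a pattern reusable whenever a classical theorem guarantees a definable witness in an enumerable syntactic class. Your argument buys constructive content: it does not invoke the existence theorem at all, it yields explicit bounds, and it makes transparent \emph{which} decomposition is produced. The price is the bookkeeping you yourself flag --- delineability of roots over each base cell and continuity of the root functions over an arbitrary real closed field (where topological connectedness arguments must be replaced by their definable analogues) --- none of which the search argument has to touch.
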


\begin{proof} Since $Th(\mc{M})$ admits elimination of quantifiers, the type of $\bar{a}$ is (uniformly) computable. There exists a first-order formula stating that these points, intervals and continuous functions  correspond to a cell decomposition of a $X_{\phi}$. We can list all formulas of this form. Thus, we will eventually find the right formula. We will then extract the definitions of the functions and end-points from the formula.\end{proof}

\noindent The next claim provides us with a condition for a definable set to contain an independent tuple.

\begin{claim}\label{kuku2}\nonitalics
Let $\mc{M}$ be a real closed field of infinite transcendence degree. Let $X \subseteq \mc{M}^n$ be a definable set with parameters $\bar{a}$. Then $X$ contains a tuple algebraically independent over $\bar{a}$ if and only if the cell decomposition of $X$ contains a cell of dimension $n$.
\end{claim}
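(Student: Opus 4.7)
The plan is to prove the two directions separately, using the recursive description of cells combined with the equivalence, in a real closed field, among the field-theoretic algebraic closure, the model-theoretic $\acl$, and the model-theoretic $\dcl$ (the latter two by the Pillay--Steinhorn result for o-minimal structures cited just above the claim).

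For the ``only if'' direction I will argue the contrapositive. Suppose every cell in the decomposition of $X$ has dimension $m < n$, and let $\bar{x} \in X$ lie in such a cell $C$. The recursive definition of cells forces at least $n - m \geq 1$ of the coordinates of any point in $C$ to be continuous $\bar{a}$-definable functions of the remaining coordinates (these are the coordinates introduced by clause (3) rather than clause (4)). Hence some $x_j$ lies in $\dcl(\bar a, x_1, \ldots, \hat{x}_j, \ldots, x_n)$, which coincides with the field-theoretic algebraic closure of $\bar a, x_1, \ldots, \hat{x}_j, \ldots, x_n$ by the equivalences mentioned, so $\bar{x}$ is algebraically dependent over $\bar{a}$.

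For the ``if'' direction, suppose the decomposition contains a cell $C$ of dimension $n$. By the recursion, a cell of dimension $n$ in $\mc{M}^n$ must use clause (4) at every step (clause (3) preserves dimension while consuming a coordinate), so $C$ is an ``open box''
\[
C = \{(x_1,\ldots,x_n) : f_i(x_1,\ldots,x_{i-1}) < x_i < g_i(x_1,\ldots,x_{i-1}) \text{ for each } i \leq n\},
\]
with each $f_i, g_i$ continuous and $\bar{a}$-definable into $\mc{M} \cup \{\pm \infty\}$. I will construct $(c_1, \ldots, c_n) \in C$ inductively so that each prefix is algebraically independent over $\bar{a}$. Given $c_1, \ldots, c_{i-1}$, the endpoints $f_i(\bar{c}_{<i}), g_i(\bar{c}_{<i})$ lie in $\acl(\bar{a}, \bar{c}_{<i})$, over which $\mc{M}$ retains infinite transcendence degree. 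The inductive step therefore reduces to the following sublemma: any non-empty open interval $I \subseteq \mc{M}$ with endpoints in a subfield $F \subseteq \mc{M}$ (or $\pm \infty$) over which $\mc{M}$ has positive transcendence degree contains an element transcendental over $F$. I will prove this by exhibiting an $F$-definable semi-algebraic bijection $\phi \colon \mc{M} \to I$, built from rational expressions in $x$ and $\sqrt{1+x^2}$, and applying $\phi$ to any $t \in \mc{M}$ transcendental over $F$; transcendence of $\phi(t)$ follows because $t = \phi^{-1}(\phi(t)) \in \acl(F, \phi(t))$.

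The hard part will be this sublemma. The reflex that ``every interval contains a transcendental'', automatic in $\mathbb{R}$, can fail in a general RCF via additive shifts alone: an interval might consist entirely of elements infinitely far from $\acl(F)$ in the ordering, so no translate of a pre-existing transcendental lands inside it. The semi-algebraic rescaling $\mc{M} \to I$ handles bounded and unbounded intervals uniformly and thereby drives the inductive construction in the forward direction.
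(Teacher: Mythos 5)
Your proof is correct and follows essentially the same route as the paper's: the contrapositive for the only-if direction via clauses (1)/(3) of the cell definition, and, for the if direction, transporting a pre-existing algebraically independent tuple into the open cell fiber-by-fiber via $\bar{a}$-definable maps that preserve interalgebraicity. The only difference is cosmetic: the paper normalizes its independent elements into $(0,1)$ (replacing $b_i$ by $\pm b_i^{\pm 1}$) and maps each into its fiber by the convex combination $b \mapsto f\,b + g\,(1-b)$, treating unbounded fibers as a ``simple modification,'' whereas your semi-algebraic bijection $\mc{M} \to I$ handles bounded and unbounded fibers uniformly.
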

\begin{proof}
It is easy to see that any tuple in a cell of dimension strictly less than $n$ is algebraically dependent over $\bar{a}$. This is because such a cell must be built using, at some point, (1) or (3) from the definition of cell decomposition. If it uses (1), then this point is definable. If it uses (3), then any tuple $(x_1,\ldots,x_n) \in X$ has $x_i = f(x_1,\ldots,x_{i-1})$ for some $i$ and some function $f$ definable over $\bar{a}$. Then $x_i$ is definable over $x_1,\ldots,x_{i-1},\bar{a}$ and hence algebraically dependent over them.

Now suppose that the cell decomposition of $X$ contains a cell $D$ of dimension $n$. Suppose that the cell $D$ is built up from an interval $(c,d)$ using functions $$(f_2,g_2),\ldots,(f_n,g_n).$$ We will assume that the functions are bounded but the case when some of them are $\pm \infty$ requires just a simple modification. Let $b_1,\ldots,b_n$ be algebraically independent over $\bar{a}$. We may assume that each $b_i$ satisfies $0 < b_i < 1$ (by replacing $b_i$ by $-b_i$, $b_i^{-1}$, or $-b_i^{-1}$ if necessary). Let $b_1' = c + (d - c)b_1$; so $b_1' \in (c,d)$. Then, for $i=1,\ldots,n-1$, let $$b_{i+1}' = f_{i+1}(b_1',\ldots,b_i')(b_{i+1}) + g_{i+1}(b_1',\ldots,b_i')(1-b_{i+1}).$$ Note that $(b_1',\ldots,b_n')$ is in the open cell $D$. Also, $b_1'$ is interdefinable with $b_1$ over $c$ and $d$, and since $c$ and $d$ are $\bar{a}$-definable, $b_1$ and $b_1'$ are interalgebraic over $\bar{a}$. Similarly, since each $f_i$ and $g_i$ is $\bar{a}$-definable, $b_2$ and $b_2'$ are interalgebraic over $\bar{a},b_1'$. In general, $b_{i+1}$ and $b_{i+1}'$ are interalgebraic over $\bar{a},b_1',\ldots,b_i'$. Since $b_1,\ldots,b_n$ are independent over $\bar{a}$, $b_1',\ldots,b_n'$ are independent over $\bar{a}$.
\end{proof}

By Theorem \ref{main}, it suffices to check that any $\mc{M} \models RCF$ of infinite dimension satisfies Conditions G and B.

We now argue that every computable real closed $\mc{M}$ of infinite transcendence degree satisfies Condition G.
It follows from Claims~\ref{kuku1} and \ref{kuku2} above that we can effectively and uniformly list the independence diagram of each $\bar{a} \in\mc{M}$. It follows at once from the proof of Claim~\ref{kuku2}, where we had no restrictions on the choice of $b_1, \ldots, b_n$ (and hence can take them to be $u_1,\ldots,u_n$), that independent tuples are locally indistinguishable in $\mc{M}$.

We show that $\mc{M}$ satisfies Condition B. 
Let $\bar{c}$ be a tuple from $\mc{M}$ and let $\bar{a}$ be independent over $\bar{c}$. Then any formula $\varphi(\bar{c},\bar{x})$ with parameters $\bar{c}$ true of $\bar{a}$ defines, by Claim \ref{kuku2}, a $\bar{c}$-definable set of dimension $n$. Such a set contains a definable open cell whose endpoints and functions are $\bar{c}$-definable, and hence it contains some point $\bar{b} \in \mathbb{Q}(\bar{c})$. For example, if the open cell is built up from an interval $(p,q)$ using bounded functions $(f_2,g_2),\ldots,(f_n,g_n)$ then let $b_1$ be the midpoint of the interval $(p,q)$, $b_2$ the midpoint of the interval $(f_2(b_1),g_2(b_1))$, and so on. So $\bar{b} \in \cl(\bar{c})$ satisfies $\varphi(\bar{c},\bar{x})$.
\end{proof}

\subsection{Torsion-free abelian groups}\label{sec:tfag}
Recall that an abelian group is torsion-free if it has no non-zero elements of finite order. We cite Fuchs~\cite{Fuchs70,Fuchs73} for background on infinite abelian groups. 
The notion of independence is the usual linear independence, but the coefficients are taken from~$\mathbb{Z}$. We will refer to parts of the proof below when we consider ordered abelian groups in the following section.

\medskip
\noindent \textbf{Theorem~\ref{th:tfag}.}  \textit{The class of computable torsion-free abelian groups has the  Mal{\textquotesingle}cev property with respect to $\mathbb{Z}$-independence.}

\medskip

\begin{proof} 
Recall that a subgroup $H \leqq A$ of an abelian group $A$ is \emph{pure} if for any integer $m$ and each $h \in H$,  $$(\exists g \in G) \, mg = h \, \Rightarrow \,  (\exists w \in H) \, mw = h.$$ 
It is well-known (see \cite{Kaplansky69}) that a finitely generated pure subgroup of an abelian group $A$ detaches as a direct summand of $A$. 
The $\mathbb{Z}$-dimension of an abelian group $A$ is often called the \textit{rank} of $A$, but to be consistent with our notation for pregeometries we will call it the dimension of $A$, $\dim(A)$. 

Let $\mc{M}$ a computable torsion-free abelian group of infinite dimension. 

\begin{claim}\label{cl:tfa1}  $\mc{M}$ satisfies Condition B. \end{claim}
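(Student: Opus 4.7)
The plan is, given any existential formula $\exists \bar{y}\,\psi(\bar{c}, \bar{y}, a)$ satisfied in $\mc{M}$ by some $a$, to build a group homomorphism $\phi$ defined on the finitely generated subgroup $H = \langle \bar{c}, \bar{y}_0, a \rangle \subseteq \mc{M}$ (for any witnesses $\bar{y}_0 \in \mc{M}^m$) that fixes $\bar{c}$ pointwise, carries $a$ into $\cl(\bar{c})$, and keeps the quantifier-free part of $\psi$ true; then $b := \phi(a) \in \cl(\bar{c})$ together with new witnesses $\phi(\bar{y}_0) \in \mc{M}$ realize the formula inside $\cl(\bar{c})$. I will localize at a single independent element by taking $m = 1$, so $\bar{c}$ contains some $c_1 \neq 0$, and assume $a \notin \cl(\bar{c})$ (otherwise $b = a$ is immediate). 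Passing to one disjunct of the DNF of $\psi$, I also assume $\psi$ is a conjunction of $\mathbb{Z}$-linear equations $L_i(\bar{c},\bar{y},x) = 0$ and inequations $M_j(\bar{c},\bar{y},x) \neq 0$.

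The key algebraic ingredient will be the splitting $H = H_1 \oplus K$, where $H_1 := H \cap \cl(\bar{c})$ is the pure closure of $\langle \bar{c} \rangle$ inside $H$. Since $\mc{M}$ is torsion-free, $H$ is finitely generated free abelian; $H_1$ is then a finitely generated pure subgroup of $H$, and hence a direct summand by the classical theorem recalled immediately before the claim. Because $a \notin \cl(\bar{c})$, the $K$-component of $a$ is nonzero, so $K \cong \mathbb{Z}^s$ with $s \geq 1$. I will define $\phi : H \to \mc{M}$ by declaring $\phi|_{H_1} = \mathrm{id}$ and by specifying the values of $\phi$ on a fixed basis $k_1,\ldots,k_s$ of $K$ inside $\mathbb{Z} c_1 \subseteq \cl(\bar{c})$; this automatically puts $\phi(a)$ and every $\phi(y_{0j})$ inside $\cl(\bar{c})$, so $b := \phi(a)$ is an element of $\cl(\bar{c})$.

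Equations will survive automatically, because any homomorphism fixing $\bar{c}$ sends $L_i(\bar{c},\bar{y}_0,a)$ to $L_i(\bar{c},\phi(\bar{y}_0),\phi(a)) = \phi(L_i(\bar{c},\bar{y}_0,a)) = \phi(0) = 0$. The real work will be to preserve the finitely many inequations. Writing the free parameters as $\phi(k_i) = n_i c_1$ with $(n_i) \in \mathbb{Z}^s$, each inequation becomes a condition $\phi(d_j) \neq 0$ on $(n_i)$, where $d_j := M_j(\bar{c},\bar{y}_0,a) \neq 0$; decomposing $d_j = d_{j,1} + d_{j,K}$ along $H_1 \oplus K$, one checks that this is either vacuous (e.g.\ when $d_{j,1} \notin \mathbb{Z} c_1$, since a multiple of $c_1$ cannot then equal $-d_{j,1}$) or cuts out exactly one coset of a proper infinite-index subgroup of $\mathbb{Z}^s$. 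The hard part is really just this bookkeeping; once it is done, B.\,H.\,Neumann's lemma on coset coverings shows that finitely many cosets of infinite-index subgroups cannot cover $\mathbb{Z}^s$ (for $s \geq 1$), so a valid $(n_i)$ exists, producing the required $b \in \cl(\bar{c})$ and witnessing $\mc{M} \models \exists \bar{y}\,\psi(\bar{c},\bar{y},b)$.
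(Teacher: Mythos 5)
Your proposal is correct and follows essentially the same route as the paper's proof: both split the finitely generated subgroup generated by $\bar{c}$, $a$, and the witnesses as (pure closure of $\bar{c}$) $\oplus$ (complement), observe that the linear equations are automatically preserved, and retract the complement into $\mathbb{Z} c_1 \subseteq \cl(\bar{c})$ while avoiding the finitely many values that would violate an inequation. The only cosmetic difference is that you choose all the images simultaneously and invoke B.\,H.\,Neumann's coset-covering lemma, whereas the paper substitutes one generator at a time, each step needing only that $\mathbb{Z} u$ is infinite.
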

\begin{proof}[Proof of Claim]
Suppose $\mc{M} \models \exists \bar{y} \psi(\bar{c}, a, \bar{y})$, where $\psi$ is a conjunction of linear equations  and negations of linear equations. By a linear equation we mean a linear equation over $\mathbb{Z}$.
We must find an element $b$ that is $\mathbb{Z}$-dependent over $\bar{c}$ and satisfies $\exists \bar{y} \psi(\bar{c}, b, \bar{y})$.

Fix any tuple $\bar{w}$ witnessing the existential quantifier $\exists \bar{y}$. 
Consider the subgroup $\mc{X}$ of $\mc{M}$ generated (as a subgroup, rather than as a pure subgroup) by  $\bar{c}, a, \bar{w}$, and let $C$ be the least pure subgroup  of $\mc{X}$ that contains $\bar{c}$. Note that $C = \cl(\bar{c}) \cap \mc{X}$. Since $\mc{X}$ is finitely generated and torsion-free, it is free abelian and thus is isomorphic to a direct sum of finitely many copies of $\mathbb{Z}$ (\cite{Fuchs70}). Furthermore, since $C$ is pure in $\mc{X}$ and is finitely generated (since it is contained in the finitely generated group $\mc{X}$), it detaches as a direct summand:
 $$\mc{X} = C \oplus W$$ for some $W$. We can choose generators $\bar{g} \bar{h}$ of $\mc{X}$ so that $C = \langle \bar{g} \rangle$ and $W = \langle \bar{h} \rangle$. Moreover, we may choose these generators to be linearly independent (since $C$ and $W$ decompose as direct sums of copies of $\mathbb{Z}$). From now on, we will assume that every generating set that we consider is such a linearly independent generating set.

Replace $\bar{c}, a, \bar{y}$ in $\psi$ by the respective linear combinations of $\bar{g} \bar{h}$ and denote the resulting formula by $\phi(\bar{g},\bar{h})$. It is equal to $\phi_0(\bar{g},\bar{h}) \wedge \phi_1(\bar{g},\bar{h})$, where $\phi_0(\bar{g},\bar{h})$ is a conjunction of $\mathbb{Z}$-linear equations and $\phi_1(\bar{g},\bar{h})$ is a conjunction of $\mathbb{Z}$-linear inequations. Write $\bar{h} = (h_1, \ldots, h_k)$ and choose any non-zero $u\in C$. We claim that there is a natural number $m$ such that replacing $h_k$ by $h_k' = mu$ preserves the validity of $\phi$ in $\mc{X}$ (and thus in $\mc{M}$ since $\phi$ is quantifier-free).
 
To see why this is the case, note that $\bar{g},\bar{h}$ is an independent set, and therefore $\phi_0$ must necessarily be a conjunction of trivial equations. If we replace $h_k$ by any other value, $\phi_0$ will still trivially be true. Now we turn to $\phi_1$. Replace $h_k$  in $\phi_1$ by a new indeterminate $z$.  There are only finitely many linear equations that could potentially witness the failure of the formula $\phi_1$ (viewed as a formula of $z$). Since the group is torsion-free, each of these equations has at most one solution (in $z$), since if a solution exists then it can be uniquely expressed as a linear combination of $\bar{g}$ and the rest of the $h_i$ (possibly with rational coefficients). Since $\mc{M}$ and hence $\mc{X}$ is torsion-free,  $\langle u \rangle \cong \mathbb{Z}$ is infinite, and so there is some $m$ such that $mu$ is not a solution to any of these equations. We conclude that replacing $h_k$ by $h_k = mu$  preserves the validity of the formula $\phi$.
We can repeat the process described above for $h_{k-1},h_{k-2},\ldots$, each time replacing them by $h_i' \in C$. So we get a tuple $\bar{h}' \in C$ such that $\phi(\bar{g},\bar{h}')$ holds in $\mc{X}$. 
 
We also note that the replacement of the parameters $\bar{c},a,\bar{w}$ in $\psi$ by $\bar{g},\bar{h}$ described above uniquely induces a replacement of the original parameters $\bar{c}, a, \bar{w}$ in $\psi$ by new parameters $\bar{c}, b, \bar{w}'$ by writing $b$ and $\bar{w}'$ as a linear combination of $\bar{g},\bar{h}'$ with the same coefficients as when we wrote $a$ and $\bar{w}$ as a linear combination of $\bar{g},\bar{h}$. Note that $\bar{c}$ stays untouched since $\bar{c} \in \langle \bar{g} \rangle$. This replacement preserves the validity of $\psi$, since each elementary equation or inequation still holds after the replacement. Since $\bar{h}' \in C$, $b$ is dependent on $\bar{c}$.
\end{proof}

\begin{claim}\label{cl:tfa2}  $\mc{M}$ satisfies Condition G. \end{claim}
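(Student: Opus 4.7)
The proof of Condition G has two halves — uniform computable enumerability of the independence diagram $\mc{I}_{\mc{M}}(\bar{c})$ in $\bar{c}$, and local indistinguishability of independent tuples — and I would derive both from a single structural characterization. Namely, I claim $\exists \bar{y}\,\psi(\bar{c}, \bar{x}, \bar{y}) \in \mc{I}_{\mc{M}}(\bar{c})$ if and only if there exist tuples $\bar{a}, \bar{w}, \bar{g}, \bar{h}$ in $\mc{M}$ and integer matrices $R, M, N, P, Q$ satisfying $\bar{c} = R\bar{g}$, $\bar{a} = M\bar{g} + N\bar{h}$, $\bar{w} = P\bar{g} + Q\bar{h}$ and $\mc{M} \models \psi(\bar{c}, \bar{a}, \bar{w})$, subject to the extra algebraic conditions that (a) the integer identity $L_a N + L_w Q = 0$ holds for each equation conjunct $L_c\bar{c} + L_a\bar{a} + L_w\bar{w} = 0$ of $\psi$, and (b) $N$ has full row rank over $\mathbb{Z}$. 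Each clause is either a quantifier-free statement about $\mc{M}$ (decidable in the computable group) or a decidable arithmetic statement about integer matrices, so the right-hand side is uniformly c.e.~in $\bar{c}$.

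For the $(\Rightarrow)$ direction, arguing as in Claim~\ref{cl:tfa1}, let $H = \langle \bar{c}, \bar{a}, \bar{w}\rangle$ and $C = \cl(\bar{c}) \cap H$; since $C$ is pure with torsion-free quotient $H/C$, we have $H = C \oplus W$, and $\mathbb{Z}$-bases $\bar{g}$ of $C$ and $\bar{h}$ of $W$ yield the required integer expressions. Independence of $\bar{a}$ over $\bar{c}$ corresponds exactly to $N$ having full row rank (its image in $H/C = W$ must be $\mathbb{Z}$-linearly independent), and $\bar{g} \cup \bar{h}$ being a $\mathbb{Z}$-basis of $H$ forces both coordinates of each equation of $\psi$ (after substitution) to vanish, giving condition~(a). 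For $(\Leftarrow)$, given the data, pick $\bar{h}^* \in \mc{M}^{|\bar{h}|}$ that is $\mathbb{Z}$-independent over $\bar{g}, \bar{h}, \bar{c}$ (available by infinite dimension) and set $\bar{a}^* = M\bar{g} + N\bar{h}^*$, $\bar{w}^* = P\bar{g} + Q\bar{h}^*$. Condition~(a) reduces each equation to its $\bar{g}$-part, which already vanishes in the original; each inequation becomes, after substitution, one of the form $\alpha \bar{g} + \beta \bar{h}^* \neq 0$, and if $\beta \neq 0$ then $\beta \bar{h}^* \notin \cl(\bar{g}, \bar{c})$ so the sum is nonzero, while $\beta = 0$ reduces the inequation to the original; and full row rank of $N$, combined with independence of $\bar{h}^*$ over $\cl(\bar{c})$, forces $\bar{a}^*$ independent over $\bar{c}$.

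Local indistinguishability follows by a further application of the framework. Starting from $\bar{u}, \bar{v}$ independent over $\bar{c}$ with $\psi(\bar{c}, \bar{u}, \bar{w}_0)$ witnessed, apply $(\Rightarrow)$ with $\bar{a} = \bar{u}$, then put $N$ into column Hermite normal form by an integer change of basis of $W$ so that $N = (T \mid 0)$ with $T \in \mathbb{Z}^{n \times n}$ lower triangular and positive-diagonal. A direct calculation shows condition~(a) and the rank condition transform covariantly and are preserved. Now apply $(\Leftarrow)$ with $\bar{h}^* = (v_1, \ldots, v_n, h^*_{n+1}, \ldots, h^*_r)$, where the extra coordinates are chosen in $\mc{M}$ independent over $\bar{g}, \bar{v}, \bar{u}, \bar{w}_0$. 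The resulting tuple $\bar{w} = M\bar{g} + (T \mid 0)\bar{h}^*$ satisfies $w_i = (M\bar{g})_i + \sum_{j \leq i} T_{ij} v_j \in \cl(\bar{c}, v_1, \ldots, v_i)$, is independent over $\bar{c}$, and is witnessed by $\bar{w}^* = P\bar{g} + Q\bar{h}^*$. The main obstacle is the Hermite normal form bookkeeping: although the reduction is routine, one must carefully track how $P, Q$ transform under the integer change of basis of $W$ and confirm that the transformed equation condition is exactly what $(\Leftarrow)$ requires, so that the triangular structure of $T$ delivers the layered membership $w_i \in \cl(\bar{c}, v_1, \ldots, v_i)$.
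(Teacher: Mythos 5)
Your argument is correct, and its skeleton is the same as the paper's: both proofs decompose the finitely generated subgroup $H = \langle \bar{c},\bar{a},\bar{w}\rangle$ as $C \oplus W$ using purity of $C = \cl(\bar{c}) \cap H$, characterize membership in $\mc{I}_{\mc{M}}(\bar{c})$ by a finitary condition that can be searched for, and obtain local indistinguishability by a basis-substitution homomorphism. Where you differ is in the finitary encoding and the diagonalization step. The paper certifies membership by exhibiting a finite \emph{partial} subgroup isomorphic to a direct sum of truncated copies of $\mathbb{Z}$, which implicitly records that all equation conjuncts become trivial in the $\bar{g},\bar{h}$ coordinates; your integer-matrix identity $L_aN + L_wQ = 0$ together with full row rank of $N$ states the same thing explicitly and avoids the partial-structure formalism, which is arguably cleaner (your verification of $(\Leftarrow)$ correctly splits equations and inequations by whether the $\bar{h}$-coefficient vanishes). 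For local indistinguishability, the paper arranges $u_i = k_i h_i$ by taking $h_i$ to generate the pure closure of $u_i$ in $U$ (getting $\tau(u_i)$ a multiple of $v_i$, hence interdependence), and first pushes the witnesses into $C \oplus U$; you instead triangularize $N$ by column Hermite normal form and send the surplus basis vectors to fresh independent elements, which yields the layered condition $w_i \in \cl(\bar{c},v_1,\ldots,v_i)$ of Definition~\ref{defn:loc} directly without relocating the witnesses. One small point to fix in the write-up: your general $(\Leftarrow)$ lemma asks for $\bar{h}^*$ independent over $\bar{g},\bar{h},\bar{c}$, but in the local-indistinguishability application $\bar{h}^* = (\bar{v},\text{extras})$ need not be independent over $\bar{h}$; this is harmless because every step of $(\Leftarrow)$ (the inequations and the independence of $\bar{a}^*$) only uses independence of $\bar{h}^*$ over $\bar{g}$, equivalently over $\bar{c}$ since $\cl(\bar{g}) = \cl(\bar{c})$, so you should weaken the hypothesis accordingly.
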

\begin{proof}[Proof of Claim] 

We will begin by proving that  independent tuples are locally indistinguishable. Suppose $\bar{u} = (u_0, \ldots, u_n)$ and $\bar{v} = (v_0, \ldots, v_n)$ are independent over $\bar{c}$.  We may assume that $\bar{c}$ contains at least one non-zero element.

Suppose that $\mc{M} \models \exists \bar{y} \psi(\bar{c}, \bar{u},\bar{y})$ where $\psi$ is quantifier-free. Let $\bar{w}$ be a tuple in $\mc{M}$ witnessing the existential quantifier.
  
Let $\mc{X}$ be the least subgroup of $\mc{M}$ that contains $\bar{c}$, $\bar{u}$ and $\bar{w}$. Let $C$ be the smallest pure subgroup of $\mc{X}$ containing $\bar{c}$ and $U$ the smallest pure subgroup of $\mc{X}$ containing $\bar{u}$. The only elements of $U \cap C$ are those which are linearly dependent over both $\bar{c}$ and $\bar{u}$, and since $\bar{u}$ is independent over $\bar{c}$ and $\mc{X}$ is torsion-free, $U \cap C = 0$. Hence we can decompose $\mc{X}$ as
\[ \mc{X} = C \oplus U \oplus W.\]
By a similar argument to the previous claim, using the fact that $\bar{c}$ contains some non-zero element we can replace $\bar{w}$ with some $\bar{w}' \in C \oplus U$. So we may assume that
\[ \mc{X} = C \oplus U.\]

Let $\bar{g}$ and $\bar{h}$ be independent tuples which generate $C$ and $U$ respectively. Moreover, we may assume that $\bar{h} = (h_1,\ldots,h_n)$ and that there are integers $k_i$ such that $u_i = k_i h_i$. To see this, let $U_i$ be the pure closure of $u_i$ in $U$. Since the $u_i$ are independent, for each $i$ we have
\[ U_i \cap \bigoplus_{j \neq i} U_j = 0. \]
Thus $U = U_1 \oplus \cdots \oplus U_n$. Each $U_i$ is isomorphic to $\mathbb{Z}$, so we can take $h_i$ to be a generator for $U_i$.

Define $\tau: \mc{X} \rightarrow C \oplus \langle v_0, \ldots, v_n \rangle $ to be the unique homomorphism such that $\tau(g_i) = g_i$ and $\tau(h_j) = v_j$ for all $i$ and $j$. Obviously $\tau$ is a homomorphism, but note that it is in fact an isomorphism: it is clearly onto, and it is also one-to-one since it maps a linearly independent generating set of $\mc{X}$ to a linearly independent set in the image. Thus $\tau(\bar{u})$ is  independent over $\cl(\bar{c})$ and satisfies $\exists \bar{y} \psi(\bar{c}, \tau(\bar{u}),\bar{y})$ with $\tau(\bar{w})$ being the witness for the $\exists$-quantifier. Finally, since for each $i$ we know that $u_i$ is a multiple of $h_i$, $\tau(u_i)$ is a multiple of $v_i$. Thus $\tau(u_i)$ and $v_i$ are interdependent.
  
\smallskip

We will now explain how we list the independence diagram of a tuple $\bar{c}$. We begin with some preliminary remarks. Given a formula $\phi(\bar{c},\bar{x}) = \exists \bar{y} \psi(\bar{c}, \bar{x}, \bar{y})$ which holds for some $\bar{a}$ independent over $\bar{c}$, then we can find a finitely generated subgroup $G$ of $\mc{M}$ containing $\bar{a}$ and $\bar{c}$ so that $G \models  \exists \bar{y} \psi(\bar{c}, \bar{a}, \bar{y})$.
  
 Using the same argument as above, we can write
 \[ G = C \oplus W,\]
 where $C$ is the pure subgroup of $G$ generated by $\bar{c}$, $W$ is the pure subgroup of $G$ generated by $\bar{a}$, and the tuple $\bar{w}$ witnessing the existential quantifier belongs to $G$. Choose independent generators $\bar{g}$ and $\bar{h}$ of $C$ and $W$ respectively. Then we can write $\bar{c}$ and $\bar{a}$ as $\mathbb{Z}$-linear combinations of $\bar{g}$ and $\bar{h}$. Let $p = |\bar{g}|$ and $q = |\bar{h}|$. Since $\bar{g}$ and $\bar{h}$ are independent, the elements of $G$ are in one-to-one correspondence with the $\mathbb{Z}$-module $\mathbb{Z}^p \oplus \mathbb{Z}^q$ with $\bar{g},\bar{h}$ mapping to the standard basis elements. We call the image of some $g \in G$ in $\mathbb{Z}^p \oplus \mathbb{Z}^q$ the \textit{formal representation} of $g$. Then the formal representations of $\bar{a}$ are $\mathbb{Z}$-linearly independent over the formal representations of $\bar{c}$, and hence over $\mathbb{Z}^p$.

Observe that there is a finite partial subgroup $H \subseteq G$ containing $\bar{c}$ and $\bar{a}$ such that $H \models  \exists \bar{y} \psi(\bar{c}, \bar{a}, \bar{y})$ and such that $H$ contains only elements of the form $$\sum_{i} m_i g_i + \sum_{j} n_j h_j,$$ 
  with $|m_i|, |n_j| \leq k$ for some $k$. By a partial group, we mean that $H$ is not necessarily closed under the group operations, but that the operations on $H$ agree with those on $G$ where possible. By $H \models  \exists \bar{y} \psi(\bar{c}, \bar{a}, \bar{y})$, we mean that there is a tuple $\bar{w}'$ in $H$ such that $\psi(\bar{c}, \bar{a}, \bar{w})$ holds with all of the intermediate terms occurring in $H$ (e.g., if $\psi$ is $c + a - 2w = 0$, then all of the terms $c + a$, $2w$, and so on appear in $H$). This partial group is isomorphic to a direct sum of partial additive groups $\mathbb{Z}$$\upharpoonright$$k$ upon $\{-k, \ldots, -1, 0, 1, \ldots, k\}$, since the $g_i$ and $h_j$ are independent (we believe that the notion of a direct sum is self-explanatory when applied to partial structures).
  
We claim that the formula $ \exists \bar{y} \psi(\bar{c}, \bar{x}, \bar{y})$ is in the independence diagram of $\bar{c}$ if and only if there exists a  finite partial subgroup $H$ containing $\bar{c}$ and a tuple $\bar{a}$ such that $H \models  \exists \bar{y} \psi(\bar{c}, \bar{a}, \bar{y})$ and $H$ has the form as described above,~i.e.: 
  
  (1) $H$ is direct sum of partial subgroups $C$ and $W$ generated by $\bar{g}$ and $\bar{h}$ as above, 
  
  (2) $\bar{c} \in C$, $\bar{a}, \bar{w} \in H$,
  
  (3) both $C$ and $W$ are direct sums of partial groups of the form $\mathbb{Z}$$\upharpoonright$$k$ for some $k$, and 
  
  (4) the formal representations of $\bar{a}$ are linearly independent over the formal representations of~$\bar{c}$. 
  
We have already checked that such a partial subgroup $H$ exists for any  $\exists \bar{y} \psi(\bar{c}, \bar{x}, \bar{y})$ in the independence diagram of $\bar{c}$. Conversely, suppose such a finite partial $H$ exists. Since the dimension of $\mc{M}$ is infinite, we can find a tuple $\bar{u}$ in $\mc{M}$ independent over $\bar{g}$ (and thus, over $\bar{c}$). The map $h_j \rightarrow u_j$ can be uniquely extended, via linear combinations, to an isomorphic embedding $\tau$ of $H$ into $\mathcal{M}$ that fixes $\bar{g}$ componentwise. Since $\psi$ is quantifier-free, we have $\mc{M} \models \psi(\bar{c}, \tau(\bar{a}), \tau(\bar{w}))$ is preserved under this embedding. Furthermore,  since the formal representation of $\bar{a}$ with respect to $\bar{g},\bar{h}$ is the same as that of $\tau(\bar{a})$ with respect to $\bar{g},\bar{u}$, since $\bar{u}$ is  independent over $\bar{g}$, we conclude that the image of  $\bar{a}$ is independent over $\bar{c}$ as desired.
    \end{proof}
It remains to apply Theorem~\ref{main}. \end{proof}

\subsection{Archimedean ordered abelian groups}\label{sec:oag} Recall that an ordered abelian group $A$ is Archimedean if for every non-zero $a,b \in A$ there exists an $m \in \mathbb{Z}$ such that $m|a| > |b|$ and $m|b|> |a|$, where $|x| = x$ if $x>0$  and $|x| = -x$ otherwise. See Kokorin and Kopytov~\cite{KokorinKopytov74} and Fuchs~\cite{Fuchs63} for more algebraic background on ordered groups.

\medskip

\noindent \textbf{Theorem~\ref{th:aoag}.}  \textit{The class of computable Archimedean ordered abelian groups has the  Mal{\textquotesingle}cev property with respect to $\mathbb{Z}$-independence.}

\medskip

\begin{proof} Suppose $\mc{M}$ is a computable  Archimedean ordered abelian group of infinite dimension. We will use properties of the ordered field  $\mathbb{R}$ throughout by embedding $\mc{M}$ into $\mathbb{R}$.

\begin{claim}\label{zozo1}
$\mc{M}$ satisfies Condition B.
\end{claim}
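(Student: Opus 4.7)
The plan is to adapt the argument of Claim~\ref{cl:tfa1}, modified to handle the order predicate. Because $\mc{M}$ is Archimedean, I would fix throughout an order-preserving embedding $\mc{M}\hookrightarrow\mathbb{R}$ and do all reasoning inside $\mathbb{R}$. I would also invoke the localization clause in Condition~B to assume that $\bar{c}$ contains at least $m=2$ independent elements $c_1,c_2$. As in Claim~\ref{cl:tfa1}, given $\mc{M}\models\exists\bar{y}\,\psi(\bar{c},\bar{y},a)$ with witnessing tuple $\bar{w}$, I form the finitely generated subgroup $\mc{X}=\langle\bar{c},a,\bar{w}\rangle$, decompose it as $\mc{X}=C\oplus W$ where $C$ is the pure closure of $\bar{c}$ in $\mc{X}$, choose $\mathbb{Z}$-independent generators $\bar{g}$ for $C$ and $\bar{h}=(h_1,\ldots,h_q)$ for $W$, and rewrite $\psi(\bar{c},a,\bar{w})$ as $\phi(\bar{g},\bar{h})$, a conjunction of atoms of the form $L=0$, $L\neq 0$, $L>0$, or $L<0$ on $\mathbb{Z}$-linear forms $L$ in $\bar{g},\bar{h}$.

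The essentially new ingredient, and what I expect to be the main obstacle, is that the order atoms give \emph{open} constraints: naively replacing $h_i$ by an element of $\cl(\bar{c})$ can flip a strict inequality. The key lemma I would establish first is that $\cl(\bar{c})$ is \emph{dense in $\mathbb{R}$}. Since $c_1,c_2$ are $\mathbb{Z}$-independent in the torsion-free group $\mc{M}$, they are $\mathbb{Q}$-linearly independent as real numbers, so $\mathbb{Z}c_1+\mathbb{Z}c_2$ is a non-cyclic subgroup of $\mathbb{R}$, and hence, by the classical trichotomy that every subgroup of $\mathbb{R}$ is trivial, cyclic, or dense, it is dense in $\mathbb{R}$. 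Since $\mathbb{Z}c_1+\mathbb{Z}c_2\subseteq\cl(\bar{c})$, so is $\cl(\bar{c})$. This is precisely where Archimedeanness and the localization at $m=2$ both enter essentially: a single independent element would only guarantee a (possibly) discrete cyclic subgroup, insufficient to hit an arbitrary non-empty open set.

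With density in hand, I would replace $h_q,h_{q-1},\ldots,h_1$ in turn by elements $z_i\in\cl(\bar{c})$, preserving the truth of $\phi$ at each step. At step $i$, I view each form $L_k$ appearing in $\phi$ as an affine function of a single real variable $x$, with the other coordinates frozen at the current $\bar{g},h_1,\ldots,h_{i-1},z_{i+1},\ldots,z_q$. Because the original $\bar{g},\bar{h}$ are $\mathbb{Q}$-independent in $\mathbb{R}$, every equation $L_k=0$ appearing in $\phi$ must be a zero form as a $\mathbb{Z}$-linear form in the variables, so equations are preserved under any substitution. Inequations $L_k\neq 0$ in which $h_i$ appears nontrivially exclude at most finitely many values of $x$, while strict inequalities in which $h_i$ appears nontrivially cut out open half-lines whose intersection contains the current value $h_i$ and is therefore a non-empty open set. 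Hence the admissible set for $x$ is a non-empty open subset of $\mathbb{R}$ minus a finite set, and by density of $\cl(\bar{c})$ I can pick $z_i\in\cl(\bar{c})$ in it.

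The remaining step is routine. The tuple $(\bar{c},b,\bar{w}')$ obtained from $(\bar{c},a,\bar{w})$ by substituting $\bar{z}$ for $\bar{h}$ and keeping $\bar{g}$ fixed witnesses $\exists\bar{y}\,\psi(\bar{c},\bar{y},b)$, because $\phi(\bar{g},\bar{z})$ holds; the parameters $\bar{c}$ are untouched since $\bar{c}\in\langle\bar{g}\rangle$; and $b$ is a $\mathbb{Z}$-linear combination of $\bar{g}$ and $\bar{z}\subseteq\cl(\bar{c})$, hence lies in $\cl(\bar{c})$. Thus dependent elements are dense in $\mc{M}$, verifying Condition~B.
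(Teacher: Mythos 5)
Your proof is correct and follows essentially the same route as the paper's: embed via H\"older's theorem, decompose $\mc{X}=C\oplus W$ as in the torsion-free case, observe that independence of $\bar{g},\bar{h}$ forces all equations in $\phi$ to be trivial so the constraints on $\bar{h}$ define a non-empty open set, and use density in $\mathbb{R}$ of a rank-$\geq 2$ subgroup of $\cl(\bar{c})$ to relocate $\bar{h}$ inside $\cl(\bar{c})$. The only difference is presentational (you substitute the $h_i$ one at a time rather than all at once).
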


\begin{proof} Suppose $\mc{M} \models \exists \bar{y} \psi(\bar{c}, a, \bar{y})$, where $a$ is $\mathbb{Z}$-independent over $\bar{c}$. We also assume $\bar{c}$ contains at least two linearly independent elements. Fix any tuple $\bar{w}$ witnessing $\exists \bar{y}$.
 As in the proof for torsion-free abelian groups, consider the free abelian group $\mc{X}$ spanned by $\bar{c}, a, \bar{w}$ and  let $C$ be  the least pure subgroup of $\mc{X}$ that contains $\bar{c}$.
We have  $$\mc{X} = C \oplus W$$ (group-theoretically) and thus we can choose generators $\bar{g} \bar{h}$ of $\mc{X}$ so that $C = \langle \bar{g} \rangle$ and $W = \langle \bar{h} \rangle$. Choose a formula $\phi(\bar{g},\bar{h})$ by replacing $\bar{c},a,\bar{w}$ with $\bar{g},\bar{h}$ as we did for torsion-free abelian groups.

We use the well-known fact that  every Archimedean ordered abelian group can be isomorphically embedded into the ordered group of reals $(\mathbb{R}, +, \leq)$~\cite{Holder96}. We identify $\mc{X}$ with its image under this embedding. Let $Y$ be the subset of $\mathbb{R}^n$ isolated by $\phi(\bar{g}, \bar{x})$.  Since $\bar{g},\bar{h}$ are independent, the formula $\phi$ can contain only trivial linear equations in $\bar{g}\bar{h}$. Thus, each component of $\bar{h}$ is contained in $Y$ together with some interval (similar to Claim~\ref{kuku2}). It is also well-known~\cite[Exercise 21 of Section 4 of][]{AliprantisBurkinshaw98} that if $H \leqq (\mathbb{R}, +)$ and $\dim(H) \geq 2$, then $H$ is dense in $(\mathbb{R}, <)$. By our assumption,  $\dim(C) \geq 2$, so we can choose $\bar{h}' \in C$ which are contained in these intervals and hence satisfy $\phi(\bar{g}, \bar{h}')$. Then, as with unordered groups, we can find $b$ and $\bar{w}'$ which are linear combinations of $\bar{g}$ and $\bar{h}$ which satisfy $\psi(\bar{g}, b,\bar{w}')$.
\end{proof}

\begin{claim}\label{zozo2}
$\mc{M}$ satisfies Condition G.
\end{claim}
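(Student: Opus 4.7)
My plan is to adapt Claim~\ref{cl:tfa2} (the torsion-free abelian group case) to the ordered setting, using the H\"older embedding $\mc{M} \hookrightarrow (\mathbb{R},+,\leq)$ already invoked in Claim~\ref{zozo1} together with the fact that any subgroup of $\mathbb{R}$ of $\mathbb{Z}$-rank at least $2$ is dense in $\mathbb{R}$. As in Claim~\ref{zozo1}, I will localize so that $\bar{c}$ contains at least two $\mathbb{Z}$-independent elements.

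For \emph{local indistinguishability}, given $\bar{u},\bar{v}$ independent over $\bar{c}$ and $\mc{M} \models \exists \bar{y}\,\psi(\bar{c},\bar{u},\bar{y})$ with witness $\bar{w}_0$, I form the subgroup $\mc{X} = \langle \bar{c},\bar{u},\bar{w}_0 \rangle$ and decompose $\mc{X} = C \oplus U \oplus W'$ with $C,U$ the pure closures of $\bar{c},\bar{u}$ (disjoint since $\bar{u}$ is independent over $\bar{c}$). First I rerun the density argument of Claim~\ref{zozo1} to replace each $W'$-component of $\bar{w}_0$ by an element of $C \oplus U$: in an independent basis of $\mc{X}$ the equation part of $\psi$ is identically true, while each of the finitely many strict inequalities defines an open half-line in the substituted variable which meets the dense (rank $\geq 2$) subgroup $C \oplus U$. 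So I may assume $\mc{X} \subseteq C \oplus U$, with independent bases $\bar{g}$ of $C$, $\bar{f}$ of $U$, and $u_i = \alpha_i f_i$ in diagonal form.

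The finitely many sign conditions of $\psi$ (after substitution) now define an open region $R \subseteq \mathbb{R}^{|\bar{f}|}$ containing $(f_1,\ldots,f_n)$; I fix an open box $B = I_1 \times \cdots \times I_n \subseteq R$. Inductively I pick $\tilde{f}_i \in I_i \cap \cl(\bar{c},v_1,\ldots,v_i) \cap \mc{M}$ with $\tilde{f}_i$ independent over $\bar{c},\tilde{f}_1,\ldots,\tilde{f}_{i-1}$: the subgroup $\cl(\bar{c},v_1,\ldots,v_i) \cap \mc{M}$ has rank $\geq 2$ hence is dense in $\mathbb{R}$, meeting $I_i$ in infinitely many points, from which I avoid the countable $\mathbb{Q}$-span of $\bar{c},\tilde{f}_1,\ldots,\tilde{f}_{i-1}$ to preserve independence. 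The linear extension $\tau\colon \mc{X} \to \mc{M}$ defined by $\tau(g_i) = g_i$, $\tau(f_i) = \tilde{f}_i$ is then an injective group homomorphism and, since $(\tilde{f}_1,\ldots,\tilde{f}_n) \in R$, it preserves every sign condition appearing in $\psi$; so $\mc{M} \models \psi(\bar{c}, \tau(\bar{u}), \tau(\bar{w}_0))$, and $w_i := \tau(u_i) = \alpha_i \tilde{f}_i \in \cl(\bar{c},v_1,\ldots,v_i)$ delivers the required $\bar{w}$.

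For the \emph{c.e.\ enumeration of $\mc{I}_{\mc{M}}(\bar{c})$}, I copy the scheme of Claim~\ref{cl:tfa2} with the finite partial subgroups $H \subseteq \mc{M}$ now carrying the order inherited from $\mc{M}$: a formula $\exists \bar{y}\,\psi(\bar{c},\bar{y},\bar{x})$ lies in $\mc{I}_{\mc{M}}(\bar{c})$ iff some finite partial ordered subgroup $H$ of the required direct-sum shape, with independent formal representations of the $\bar{a}$-tuple over $\bar{c}$, witnesses $\psi(\bar{c},\bar{w},\bar{a})$ inside $H$. Enumerating such $H$ is c.e.\ from $\mc{M}$, and for the converse the embedding of $H$ back into $\mc{M}$ must additionally respect the finitely many sign conditions on $\mathbb{Z}$-combinations of $\bar{g},\bar{h}$ recorded in $H$; these define an open region in $\mathbb{R}^{|\bar{h}|}$, and the same box-and-density step-by-step construction, using that $\mc{M}$ of infinite rank is dense in $\mathbb{R}$, produces a tuple $\bar{a}' \in \mc{M}$ independent over $\bar{c}$ inside it. The main new obstacle compared to the torsion-free case is the order: two independent tuples over $\bar{c}$ can have very different quantifier-free types in an ordered group, so one cannot simply send generators $\bar{h} \mapsto \bar{v}$; the open-box-in-$R$ trick powered by density of rank-$\geq 2$ subgroups of $\mathbb{R}$ is exactly what simultaneously accommodates the finitely many sign conditions of any given $\psi$ and the prescribed closure constraints.
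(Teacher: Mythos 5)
Your proof is correct and follows essentially the same route as the paper: H\"older-embed $\mc{M}$ into $(\mathbb{R},+,\leq)$, note that independence of the generators trivializes the equational part of $\psi$ so that the remaining strict inequalities cut out a nonempty open region, and use density of rank-$\geq 2$ subgroups of $\mathbb{R}$ to place witnesses in the prescribed closures, with the finite-partial-ordered-subgroup characterization carrying the enumeration of $\mc{I}_{\mc{M}}(\bar{c})$ exactly as in the torsion-free case. One small point of rigor: since $I_i \cap \cl(\bar{c},v_1,\ldots,v_i) \cap \mc{M}$ is itself only countable, "avoiding the countable $\mathbb{Q}$-span" needs the justification the paper gives, namely that points of the form $s c + t v_i$ with $t \neq 0$ are already dense in $I_i$ and automatically lie outside the span of $\bar{c},\tilde{f}_1,\ldots,\tilde{f}_{i-1}$.
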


\begin{proof}
We first show that independent tuples in $\mc{M}$ are locally indistinguishable.  Suppose $\bar{u} = (u_0, \ldots, u_n)$ and $\bar{v} = (v_0, \ldots, v_n)$ are independent over $\bar{c}$. 
We assume that the dimension of $\bar{c}$ is at least~$1$. Let $\theta$ be an existential formula such that $\mc{M} \models \theta(\bar{c}, \bar{u})$. 
We show that there exists a tuple $\bar{z} = (z_0, \ldots, z_n) \in \cl(\bar{c},\bar{v})$ independent over $\bar{c}$ such that $\mc{M} \models \theta(\bar{c}, \bar{z})$.

As we have already noted above, $\mc{M}$ can be identified with a subgroup of $\mathbb{R}$. 
Since $\bar{u}$ is independent over $\bar{c}$, there exists an open neighborhood $U$ of $\bar{u}$ in the respective power of $\mathbb{R}$ such that every tuple from $U$ satisfies $\theta$. The argument is essentially the same as before. We can as before pass to the smallest pure subgroup of $\mc{M}$ containing $\bar{c}, \bar{u}$ and the witnesses for the existential quantifier. Then we re-write the formula in terms of the generators of this finitely generated free abelian group. We then conclude that all the equations in the formula must become trivial after the re-writing, and thus each generator is contained in a definable open interval such that any choices of elements from those intervals satisfy the formula. Since $\bar{u}$ is a linear combination of the generators, it is also contained in a definable open set $U$ around with the property that every tuple from $U$ satisfies $\theta(\bar{c},x)$.

Since $\bar{v}$ is independent over $\bar{c}$, for every $i$ and any non-zero $c \in \cl(\bar{c})$ the set $\{sc +tv_i: s, t \in \mathbb{Z}\}$ is dense in $(\mathbb{R}, <)$ since it has dimension at least 2. Thus, for an arbitrary non-zero $c \in \cl(\bar{c})$ we can find  $s_i, t_i$ such that the tuple $\bar{z} = (z_0, \ldots, z_n)$ with
 $$z_i = s_i c + t_iv_i, \,\,\,\,\, i  = 0, \ldots, n, $$
belongs to $U$. Then $\bar{z}$ satisfies the desired properties.
  
Now we will describe a method of enumerating the independence diagram of $\bar{c} \in \mc{M}$. Every existential formula $\theta(\bar{c}, \bar{x})$ in the independence diagram of $\bar{c}$ is witnessed by a quantifier-free formula $\psi(\bar{c}, \bar{a}, \bar{w})$ in the open diagram of $\mc{M}$, where $\bar{a}$ is independent over $\bar{c}$ and where $\bar{w}$ are the witnesses to the existential quantifier from $\theta$.
 
 We follow the second half of the proof of Claim~\ref{cl:tfa2} closely. We claim that $\theta(\bar{c}, \bar{a})$ is in the independence diagram of $\bar{c}$ if and only if $\psi(\bar{c},\bar{a},\bar{w})$ is satisfied in a finite partial ordered subgroup $H$ of $\mc{M}$ that:
 
  (1) $H$ is direct sum of partial subgroups $C$ and $W$ generated by $\bar{g}$ and $\bar{h}$ as in Claim~\ref{cl:tfa2}, 
  
  (2) $\bar{c} \in C$, $\bar{a}, \bar{w} \in H$,
  
  (3) both $C$ and $W$ are direct sums of partial groups of the form $\mathbb{Z}$$\upharpoonright$$k$ for some $k$, and 
  
  (4) the formal representations of $\bar{a}$ are linearly independent over the formal representations of~$\bar{c}$.

\noindent If the formula $\theta(\bar{c},\bar{x})$ is in the independence diagram of $\bar{c}$, then such an $H$ exists (the proof is the same as for Claim~\ref{cl:tfa2}).

Now suppose such an $H$ exists. Rewrite the formula $\psi(\bar{c},\bar{a},\bar{w})$ in terms of generators of $C$ and $H$. Then all of the linear equations become trivial (since otherwise we would not have $H = \bigoplus_i \mathbb{Z}$$\upharpoonright$$k$). Therefore, 
the new formula isolates a non-empty open set in the respective power of $\mathbb{R}$. Since  $\dim(\mc{M})$ is infinite, using elements of the form $mx+ny$, where $x,y $ are independent over $\bar{c}$,  we can find 
a tuple $\bar{\xi}$ independent over $\bar{c}$ (equivalently, over $C$) that satisfies the formula  (as these elements for a group of dimension at least two). 
We then re-write the formula ``back'', as in the proof of Claim~\ref{cl:tfa2}. In other words, if $C = \langle \bar{g} \rangle$ and $W = \langle \bar{h} \rangle$, then $x_j = \sum_i m_{j,i} g_i + \sum_k n_{j,k} h_k$
is replaced by $z_i = \sum_i m_{j,i} g_i + \sum_k n_{j,k} \xi_k$. 
 The same argument as in Claim~\ref{cl:tfa2} that involves dimensions of formal representations shows that the new solution $\bar{z}$ is $\cl(\bar{c})$-independent.
\end{proof}\renewcommand{\qedsymbol}{}
\end{proof}

\section{Conclusion}
We suspect that our metatheorem holds for many other algebraic classes and classes of theories. For instance,  the theory $T_{\exp}$ of $\mathbb{R}$ as a field with the exponential function would be an appropriate candidate.
 Wilkie and Macintyre \cite{MacintyreWilkie96} showed that, assuming that Schanuel's conjecture\footnote{Schanuel's conjecture is as follows: suppose that $n \geq 1$ and $c_1,\ldots,c_n \in \mathbb{R}$ are linearly independent over $\mathbb{Q}$; then $c_1,\ldots,c_n,e^{c_1},\ldots,e^{c_n}$ have transcendence degree at least $n$ over $\mathbb{Q}$.} is true, $T_{\exp}$ is a decidable theory. This is still unresolved.
Recently, Jones and Servi \cite{JonesServi11} and Miller \cite{Miller} gave examples of decidable theories expanding the theory of real closed fields.  We suspect that the corresponding algebraic classes might have the  Mal{\textquotesingle}cev property.
We also conjecture that some other field-like structures perhaps including algebraically closed valued fields and the like~\cite{HaskellHrushovskiMacpherson08} have the  Mal{\textquotesingle}cev property. 

Although we did not include the case of an ordered abelian group with finitely many Archimedean classes, we conjecture that our methods can be applied to simplify the proof in~\cite{GoncharovLemppSolomon03}. Our ideas may be useful in covering the case of infinitely many Archimedean classes (this is an open problem~\cite{GoncharovLemppSolomon03}), but perhaps some adjustments and new ideas will be necessary. 
 
 We also conjecture that our metatheorem can be applied to arbitrary computable abelian groups with respect to $\mathbb{Z}$-independence (note that torsion elements are ``dependent on themselves''), see \cite{Goncharov80, Khisamiev98}. 

We suspect that our metatheorem has relativized and generalized versions that could be applied to, say, completely decomposable groups~\cite{DowneyMelnikov14,Khisamiev02} and other structures where the notions of independence are not r.i.c.e. but are relatively intrinsically $\Sigma^0_n$ for some $n>1$.

We also note that the spiritually related \emph{$p$-basic tree problem}~\cite{AshKnight00,Melnikov} seems to require new ideas since the corresponding notion of independence is \emph{not} a pregeometry.
Finally, we would like to find some non-trivial applications to \emph{non-commutative} structures.

\bibliography{References}
\bibliographystyle{alpha}

\end{document}